\tikzset{
    middlearrow/.style n args={3}{
        draw,
        decoration={
            markings,
            mark=at position 0.5 with {
                \arrow[scale=2]{#1};
                \path[#2] node {$#3$};
            },
        },
        postaction=decorate
    }
}
\newtheorem{theorem}{Theorem}[section]
\newtheorem{proposition}[theorem]{Proposition}
\theoremstyle{definition}
\newtheorem{definition}[theorem]{Definition}
\theoremstyle{remark}
\newtheorem{remark}[theorem]{Remark}
\newcommand{\R}{\mathbb{R}}
\newcommand{\N}{\mathbb{N}}
\newcommand{\ind}{\textnormal{index}}
\newcommand{\diag}{\textnormal{diag}}
\newcommand{\B}{{\cal B}}
\begin{document}

\begin{center}
{\large{\bf Arbitrarily large heteroclinic networks in fixed low-dimensional state space}}\\
\mbox{} \\
\begin{tabular}{cc}
{\bf Sofia B.\ S.\ D.\ Castro$^{\dagger}$} & {\bf Alexander Lohse$^{\ddagger,*}$} \\
{\small sdcastro@fep.up.pt} & {\small alexander.lohse@uni-hamburg.de}
\end{tabular}
\end{center}

\noindent $^{*}$ Corresponding author.

\noindent $^{\dagger}$ Faculdade de Economia and Centro de Matem\'atica, Universidade do Porto, Rua Dr.\ Roberto Frias, 4200-464 Porto, Portugal.

\noindent $^{\ddagger}$ Fachbereich Mathematik, Universit\"at Hamburg, Bundesstra{\ss}e 55, 20146 Hamburg, Germany.

\begin{abstract}
We consider heteroclinic networks between $n \in \N$ nodes where the only connections are those linking each node to its two subsequent neighbouring ones. Using a construction method where all nodes are placed in a single one-dimensional space and the connections lie in coordinate planes, we show that it is possible to robustly realise these networks in $\R^6$ for any number of nodes $n$ using a polynomial vector field. This bound on the space dimension (while the number of nodes in the network goes to $\infty$) is a novel phenomenon and a step towards more efficient realisation methods for given connection structures in terms of the required number of space dimensions. We briefly discuss some stability properties of the generated heteroclinic objects. 
\end{abstract}

\noindent {\em Keywords:} heteroclinic cycle, heteroclinic network, directed graph

\vspace{.3cm}

\noindent {\em AMS classification:} 34C37, 37C29\\

{\bf In a dynamical system there may be sequences of equilibria connected by solutions, called heteroclinic trajectories. These can form a heteroclinic cycle, when finitely many equilibria and connecting heteroclinic trajectories form a closed loop, or a heteroclinic network, when a finite number of heteroclinic cycles have a non-empty intersection. Although the graph of a heteroclinic network is unique, the vector fields that can support the existence of a heteroclinic network (corresponding to a given graph) are not. Existing work on finding these vector fields is such that an increase in the number of connecting trajectories and/or equilibria demands an increase in state space dimension. We prove that this need not be the case and provide a construction process that works in state space dimension at most 6.}

\section{Introduction}

We are concerned with heteroclinic networks between nodes $\xi_1, \ldots, \xi_n$ with connections $[\xi_k \to \xi_{k+1}]$ and $[\xi_k \to \xi_{k+2}]$ for all $k$ (modulo $n$). We call these networks \emph{double-next-neighbour (DNN) networks}. Well-known examples of DNN networks are the Rock-Paper-Scissors with two players \cite{GdS-C2020} and the Rock-Paper-Scissors-Lizard-Spock \cite{Castro_etal2022, PostlethwaiteRucklidge2022}. These exhibit cyclic dominance which is a feature of DNN networks where connections have an inhibitory effect.
 Such a connection structure can be realised as a heteroclinic network in $\R^n$ through one of several construction methods in the literature \cite{AP2013, AP2016a, Field2015, Field2017}. In a discrete-time setting, DNN networks appear in \cite{PosStu2022} as ring graphs of type $(n,2)$, that is, $n$ nodes with $2$-nearest-neighbour coupling. The smallest such graph is the aforementioned Rock-Paper-Scissors-Lizard-Spock. 
Afraimovich et al.\ \cite{AfraimovichMosesYoung} use a realisation for DNN networks that is in spirit similar to the simplex method \cite{AP2013}: they place each node on its own coordinate axis, thus requiring at least as many space dimensions as there are nodes. In this paper we give an alternative construction for heteroclinic networks that realise the same connection structure in $\R^N$, such that the space dimension $N$ is independent of the number of nodes in the network $n$. This means we can fit any such network into a space of fixed dimension, regardless of how many nodes the network involves. We are not aware of similar results in the literature.

Our idea is to use a construction that is geometrically similar to the cylinder method \cite{AP2013}, even though the equations turn out to be substantially different. We construct a polynomial vector field in $\R^N$, $N\leq 6$, with a degree that is increasing in the number of nodes $n$. Geometrically, all nodes are placed on the same axis and connections are contained in two-dimensional coordinate planes. However, we crucially make efficient use of these planes by placing more and more connections in the same plane as the number of nodes increases: the cylinder method uses a new plane for each connection, thus requiring $2n$ planes in total for our type of network, realising it in $\R^{2n+1}$ because of the additional dimension containing the nodes.

We provide a more efficient construction than that in two ways:
\begin{itemize}
\item by using each plane twice, i.e.\ by placing $[\xi_{k-2} \to \xi_k]$ and $[\xi_{k-1} \to \xi_k]$, the incoming connections to $\xi_{k}$, in the same plane $P_{0k}$, see the coloring\footnote{We use different color and line type for the convenience of those reading in black and white.} of connections in Figure~\ref{fig1}, where the case $n=4$ is illustrated.
\item by placing the connections $[\xi_{k-2} \to \xi_k]$ and $[\xi_{k-1} \to \xi_k]$ in the same plane for different values of $k$.
\end{itemize}

We define coordinates $(x,y_1,\ldots,y_{N-1})$ in $\R^N$ and write $P_{0k}$ for the two-dimensional subspace where only $x$ and $y_k$ are allowed to be non-zero. For our results $N\leq 6$ suffices for the construction of a DNN network with any number of nodes. 

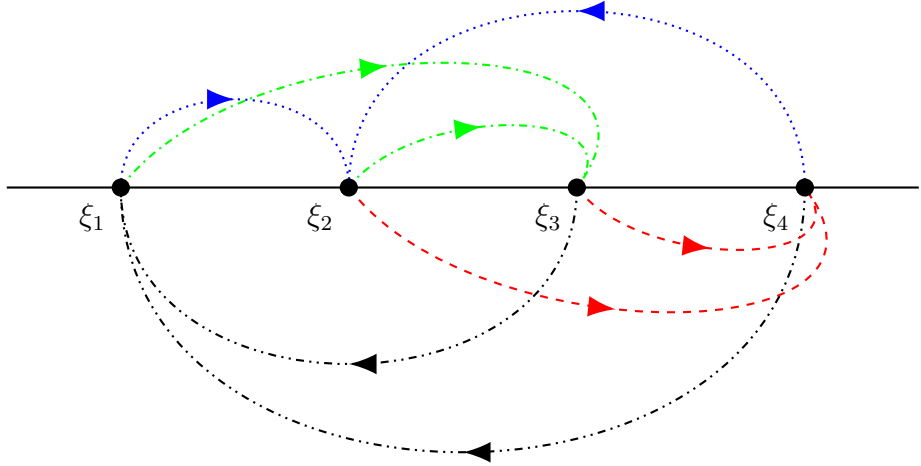
\begin{figure}[!htb]
 \centering
 \begin{tikzpicture}[>=latex,thick,xscale=1.5,yscale=2]
\coordinate (1) at (0,0);
\coordinate (2) at (2,0);
\coordinate (3) at (4,0);
\coordinate (4) at (6,0);
\draw (-1,0) -- (7,0);
\path (3) edge[dash dot dot, out=-90,in=-90,middlearrow={>}{}{}] (1);
\path (4) edge[dash dot dot, out=-90,in=-90,middlearrow={>}{}{}] (1);
\path (1) edge[blue, dotted, out=90,in=90,middlearrow={>}{}{}] (2);
\path (4) edge[blue, dotted, out=90,in=90,middlearrow={>}{}{}] (2);
\path (1) edge[green, dash dot, out=45,in=45,middlearrow={>}{}{}] (3);
\path (2) edge[green, dash dot, out=45,in=45,middlearrow={>}{}{}] (3);
\path (2) edge[red, dashed, out=-45,in=-45,middlearrow={>}{}{}] (4);
\path (3) edge[red, dashed, out=-45,in=-45,middlearrow={>}{}{}] (4);
\filldraw[black,yscale=0.75] (1) circle (2pt) node[below left=.5mm and .5mm of 1] {$\xi_1$};
\filldraw[black,yscale=0.75] (2) circle (2pt) node[below left=.5mm and .5mm of 2] {$\xi_2$};
\filldraw[black,yscale=0.75] (3) circle (2pt) node[below left=.5mm and .5mm of 3] {$\xi_3$};
\filldraw[black,yscale=0.75] (4) circle (2pt) node[below left=.5mm and .5mm of 4] {$\xi_4$};
\end{tikzpicture}
 \caption{Sketch of a DNN network for $n=4$ in $\R^5$. Four different planes are used, each containing the heteroclinic connections incoming at one of the four nodes. An additional line contains all the equilibria. Connections with the same colour or the same line type occur in the same plane.}
 \label{fig1}
\end{figure}

For a small number of nodes ($n \leq 5$) it is not possible to have in the same plane the connections $[\xi_{k-2} \to \xi_k]$ and $[\xi_{k-1} \to \xi_k]$ as well as the connections $[\xi_{j-2} \to \xi_j]$ and $[\xi_{j-1} \to \xi_j]$ for $j \neq k$, but we show how it can be done for $n \geq 6$, see Figure~\ref{fig2}, where $n=8$ is illustrated. Our main result Theorem~\ref{thm1} states that with this technique any directed graph of the type described above can be realised as a heteroclinic network in $\R^6$ by a polynomial vector field.

We note that our realisation is also more efficient than the simplex method when the number of nodes is greater than six.

Our construction is well suited to the context of coupled cell networks. In such context, the line containing all the equilibria corresponds to a synchrony subspace of all the cells and the connections correspond to a smaller number of cells desynchronising from the rest to resynchronise at the end of each connection. See, for instance, Field \cite{Field2015}.

Furthermore, our construction is connected to low-dimensional realisations for particular forms of {\em winnerless competition} networks. The concept of winnerless competition was introduced by \cite{Rab_etal2001} and appears, among other sources, in \cite{AfrRabVar2004}, \cite{AfrZakRab2004} and \cite{AfrZhiRab2004}.

We note that our DNN networks are not nearest and next-to-nearest neighbour networks as normally used in the context of coupled cells. The connections from a node are only to nodes with higher indices (modulo $n$). Therefore, e.g., in a DNN network $\xi_3$ does not connect to $\xi_2$ unless there are only three nodes in the network.

The rest of this paper is structured as follows: in section~\ref{sec_prelim} we introduce relevant terminology and results from the literature. Then in section~\ref{sec_3-6} we discuss the cases $n=3,4,5,6$ in detail, which enables us to state and prove the general case in section~\ref{sec_n}. Finally, in section~\ref{sec_stability} we comment on the stability of some of the cycles generated in the networks we design.

\section{Preliminaries}
\label{sec_prelim}
For a (smooth) continuous dynamical system on $\R^N$ given by
\begin{align}
\label{sys}
\dot x=f(x)
\end{align}
a \emph{heteroclinic cycle} consists of finitely many equilibria $\xi_1, \ldots,\xi_n$ together with trajectories that connect them cyclically in the sense that
$$W^u(\xi_k) \cap W^s(\xi_{k+1}) \neq \emptyset$$
for all $k$ modulo $n$. A \emph{heteroclinic network} is a connected union of more than one, and finitely many, heteroclinic cycles. Slightly deviating definitions can be found in the literature, but the differences are of no concern for the purpose of this paper. 

A heteroclinic cycle or network can intuitively be associated with a directed graph (digraph), where the vertices and edges of the digraph correspond to equilibria and connections in the cycle or network, respectively. We say that a given digraph is realised by system~(\ref{sys}) if there is a one-to-one correspondence between vertices/edges in the graph and equilibria / heteroclinic connections for the dynamics of system~(\ref{sys}), see \cite{AshwinCastroLohse, PodviginaLohse2019} for a precise discussion of this topic.

In this work we are concerned with a particular type of digraph and its realisations as heteroclinic networks:
\begin{definition}
We call a digraph with vertices $v_1, \ldots,v_n$ and edges $[v_k \to v_{k+1}]$ and $[v_k \to v_{k+2}]$ for all $k$ (modulo $n$) a \emph{double-next-neighbour graph} or \emph{DNN graph}.

Furthermore, we call a heteroclinic network realising a given DNN graph a \emph{DNN network}.
\end{definition}

Note that DNN graphs are unique for a given $n$, but the same is not true for DNN networks, since there are many different ways of realising a given digraph as a heteroclinic network: besides the realization method in this article, see alternative realizations in the literature such as \cite{AP2013, AP2016a, Field2015, Field2017}. The DNN graph for $n\in \{3,4,5,6\}$ is shown in Figure~\ref{fig3}, while Figures~\ref{fig1} and \ref{fig2} display sketches of our realisations of the DNN graphs for $n=4$ and $n=8$, respectively.

\begin{figure}
\centering
\begin{tikzpicture}[>=latex,thick,xscale=.9,yscale=1.2]
\coordinate (1) at (0,0);
\coordinate (2) at (2,0);
\coordinate (3) at (4,0);
\coordinate (4) at (6,0);
\coordinate (5) at (8,0);
\coordinate (6) at (10,0);
\coordinate (7) at (12,0);
\coordinate (8) at (14,0);
\draw (-0.5,0) -- (14.5,0);
\path (7) edge[dash dot dot, out=-45,in=-45,middlearrow={>}{}{}] (1);
\path (8) edge[dash dot dot, out=-45,in=-45,middlearrow={>}{}{}] (1);
\path (1) edge[blue, dotted, out=45,in=45,middlearrow={>}{}{}] (2);
\path (8) edge[blue, dotted, out=45,in=45,middlearrow={>}{}{}] (2);
\path (1) edge[green, dash dot, out=90,in=90,middlearrow={>}{}{}] (3);
\path (2) edge[green, dash dot, out=90,in=90,middlearrow={>}{}{}] (3);
\path (4) edge[green, dash dot, out=90,in=90,middlearrow={>}{}{}] (6);
\path (5) edge[green, dash dot, out=90,in=90,middlearrow={>}{}{}] (6);
\path (2) edge[red, dashed, out=-90,in=-90,middlearrow={>}{}{}] (4);
\path (3) edge[red, dashed, out=-90,in=-90,middlearrow={>}{}{}] (4);
\path (5) edge[red, dashed, out=-90,in=-90,middlearrow={>}{}{}] (7);
\path (6) edge[red, dashed, out=-90,in=-90,middlearrow={>}{}{}] (7);
\path (3) edge[orange, loosely dashed, out=70,in=70,middlearrow={>}{}{}] (5);
\path (4) edge[orange, loosely dashed, out=70,in=70,middlearrow={>}{}{}] (5);
\path (6) edge[orange, loosely dashed, out=70,in=70,middlearrow={>}{}{}] (8);
\path (7) edge[orange, loosely dashed, out=70,in=70,middlearrow={>}{}{}] (8);
\filldraw[black,yscale=0.75] (1) circle (2pt) node[below left=.5mm and .5mm of 1] {$\xi_1$};
\filldraw[black,yscale=0.75] (2) circle (2pt) node[below left=.5mm and .5mm of 2] {$\xi_2$};
\filldraw[black,yscale=0.75] (3) circle (2pt) node[below left=.5mm and .5mm of 3] {$\xi_3$};
\filldraw[black,yscale=0.75] (4) circle (2pt) node[below left=.5mm and .5mm of 4] {$\xi_4$};
\filldraw[black,yscale=0.75] (5) circle (2pt) node[below left=.5mm and .5mm of 5] {$\xi_5$};
\filldraw[black,yscale=0.75] (6) circle (2pt) node[below left=.5mm and .5mm of 6] {$\xi_6$};
\filldraw[black,yscale=0.75] (7) circle (2pt) node[below left=.5mm and .5mm of 7] {$\xi_7$};
\filldraw[black,yscale=0.75] (8) circle (2pt) node[below left=.5mm and .5mm of 8] {$\xi_8$};
\end{tikzpicture}
 \caption{Sketch of a DNN network for $n=8$ in $\R^6$. The incoming connections at $\xi_j$ and $\xi_{j+3}$ are both contained in the plane $P_{0j}$ for $j=3,4,5$. Connections with the same colour or the same line type occur in the same plane.}
\label{fig2}
\end{figure}
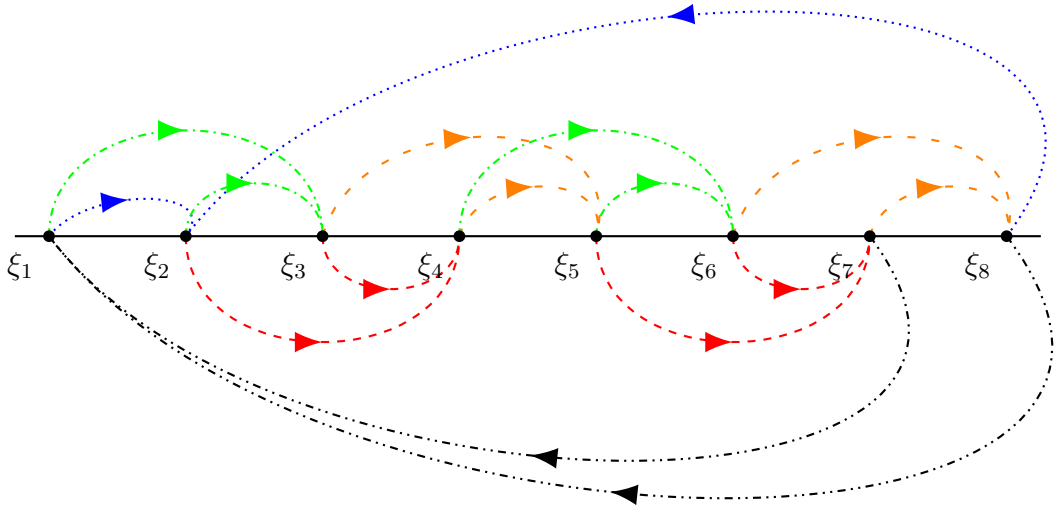

\begin{figure}
\centering
\subfigure{
	\begin{tikzpicture}[>=latex,thick,scale=1.2]
	\coordinate (1) at (0,0);
	\coordinate (2) at (4,0);
	\coordinate (3) at (2,-3.464);
	\filldraw[black] (1) circle (2pt) node[left] {$\xi_1$};
	\filldraw[black] (2) circle (2pt) node[right] {$\xi_2$};
	\filldraw[black] (3) circle (2pt) node[below] {$\xi_3$};
	\path (1) edge[middlearrow={>}{left}{}, bend right] (2);
	\path (2) edge[middlearrow={>}{left}{}, bend right] (3);
	\path (3) edge[middlearrow={>}{left}{}, bend right] (1);
	\path (2) edge[middlearrow={>}{left}{}, bend right] (1);
	\path (3) edge[middlearrow={>}{left}{}, bend right] (2);
	\path (1) edge[middlearrow={>}{left}{}, bend right] (3);
	\end{tikzpicture}
}
\qquad
\subfigure{
	\begin{tikzpicture}[>=latex,thick,scale=1.2]
	\coordinate (1) at (0,0);
	\coordinate (2) at (4,0);
	\coordinate (3) at (4,-4);
	\coordinate (4) at (0,-4);
	\filldraw[black] (1) circle (2pt) node[left] {$\xi_1$};
	\filldraw[black] (2) circle (2pt) node[right] {$\xi_2$};
	\filldraw[black] (3) circle (2pt) node[right] {$\xi_3$};
	\filldraw[black] (4) circle (2pt) node[left] {$\xi_4$};
	\path (1) edge[middlearrow={>}{left}{}] (2);
	\path (2) edge[middlearrow={>}{left}{}] (3);
	\path (3) edge[middlearrow={>}{left}{}] (4);
	\path (4) edge[middlearrow={>}{left}{}] (1);
	\path (1) edge[out=340,in=110,middlearrow={>}{}{}] (3);
	\path (3) edge[out=160,in=290,middlearrow={>}{}{}] (1);
	\path (2) edge[out=250,in=20,middlearrow={>}{}{}] (4);
	\path (4) edge[out=70,in=200,middlearrow={>}{}{}] (2);
	\end{tikzpicture}
}
\bigbreak
\bigbreak
\subfigure{
	\begin{tikzpicture}[>=latex,thick,scale=3, baseline={(0,-4.5)}]
	\coordinate (1) at (0,0.588);
	\coordinate (2) at (0.809,0);
	\coordinate (3) at (0.5,-0.951);
	\coordinate (4) at (-0.5,-0.951);
	\coordinate (5) at (-0.809,0);
	\filldraw[black] (1) circle (0.8pt) node[above] {$\xi_1$};
	\filldraw[black] (2) circle (0.8pt) node[right] {$\xi_2$};
	\filldraw[black] (3) circle (0.8pt) node[right] {$\xi_3$};
	\filldraw[black] (4) circle (0.8pt) node[left] {$\xi_4$};
	\filldraw[black] (5) circle (0.8pt) node[left] {$\xi_5$};
	\path (1) edge[middlearrow={>}{left}{}] (2);
	\path (2) edge[middlearrow={>}{left}{}] (3);
	\path (3) edge[middlearrow={>}{left}{}] (4);
	\path (4) edge[middlearrow={>}{left}{}] (5);
	\path (5) edge[middlearrow={>}{left}{}] (1);
	\path (1) edge[middlearrow={>}{left}{}] (3);
	\path (2) edge[middlearrow={>}{left}{}] (4);
	\path (3) edge[middlearrow={>}{left}{}] (5);
	\path (4) edge[middlearrow={>}{left}{}] (1);
	\path (5) edge[middlearrow={>}{left}{}] (2);
	\end{tikzpicture}
}
\quad
\subfigure{
	\begin{tikzpicture}[>=latex,thick,scale=1]
	\coordinate (1) at (0,3);
	\coordinate (2) at (2.598,1.5);
	\coordinate (3) at (2.598,-1.5);
	\coordinate (4) at (0,-3);
	\coordinate (5) at (-2.598,-1.5);
	\coordinate (6) at (-2.598,1.5);
	\filldraw[black] (1) circle (2.4pt) node[above] {$\xi_1$};
	\filldraw[black] (2) circle (2.4pt) node[right] {$\xi_2$};
	\filldraw[black] (3) circle (2.4pt) node[right] {$\xi_3$};
	\filldraw[black] (4) circle (2.4pt) node[below] {$\xi_4$};
	\filldraw[black] (5) circle (2.4pt) node[left] {$\xi_5$};
	\filldraw[black] (6) circle (2.4pt) node[left] {$\xi_6$};
	\path (1) edge[middlearrow={>}{left}{}] (2);
	\path (2) edge[middlearrow={>}{left}{}] (3);
	\path (3) edge[middlearrow={>}{left}{}] (4);
	\path (4) edge[middlearrow={>}{left}{}] (5);
	\path (5) edge[middlearrow={>}{left}{}] (6);
	\path (6) edge[middlearrow={>}{left}{}] (1);
	\path (1) edge[middlearrow={>}{left}{}] (3);
	\path (2) edge[middlearrow={>}{left}{}] (4);
	\path (3) edge[middlearrow={>}{left}{}] (5);
	\path (4) edge[middlearrow={>}{left}{}] (6);
	\path (5) edge[middlearrow={>}{left}{}] (1);
	\path (6) edge[middlearrow={>}{left}{}] (2);
	\end{tikzpicture}
}
 \caption{The DNN graphs for $n \in \{3,4,5,6\}$.}
\label{fig3}
\end{figure}
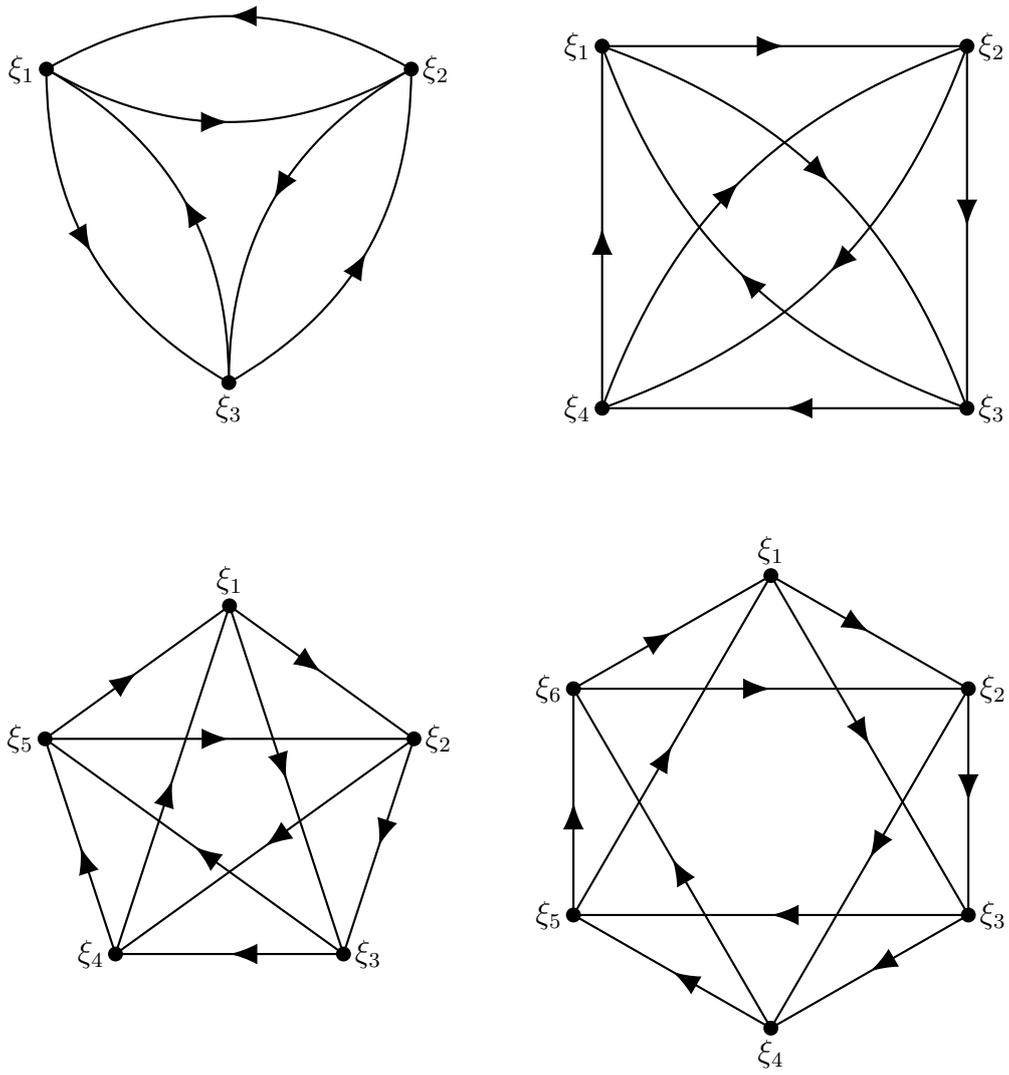
    
In a dynamical system with a DNN network there will often be more steady states than those involved in the heteroclinic structure corresponding to the digraph. To make a clear distinction between these two types of steady states, we refer to those corresponding to vertices in the DNN graph as \emph{nodes} and to the others simply as \emph{equilibria}.

\section{DNN networks with no more than six nodes}
\label{sec_3-6}
In this section we explicitly construct and discuss DNN networks for $n \in \{3,4,5,6\}$ to highlight the strategy used to overcome the main obstacles that occur when many heteroclinic connections are to be realised in the same plane. Looking at these low-dimensional cases in detail provides us with all the tools we need to prove our result for the general case of arbitrary $n \in \N$ in section~\ref{sec_n}.

Using coordinates $(x,y_1,\ldots,y_{N-1})$ in $\R^N$, $N\leq 6$, we write $\R^N_{y\geq 0}$ for the part of $\R^N$, where all $y_j$-coordinates are non-negative. Our vector fields are defined on $\R^N_{y \geq 0}$, but can be symmetrically extended to all of $\R^N$. However, for many cases of interest (e.g.\ population dynamics or game theory) this is not necessary.

Recall that our construction places all nodes on the $x$-axis, so each node $\xi_k$ has a unique non-zero coordinate. For convenience of notation we sometimes identify $\xi_k$ with its non-zero coordinate. So if the $x$-coordinate of $\xi_k$ is, say, $3$ we write $\xi_k + \frac{1}{2}$ instead of $\frac{7}{2}$. This clarifies the placement of the nullclines in what follows.

\subsection{A DNN network with three nodes}\label{subsec:n=3}
We start with the case $n=3$, for which we need four dimensions.
\begin{proposition}
\label{prop_3}
For $\varepsilon>0$ sufficiently small, system~(\ref{sys3}) realises the DNN graph with $n=3$ as a heteroclinic network in $\R^4_{y\geq 0}$ between the nodes $\xi_1:=(1,0,0,0)$, $\xi_2:=(3,0,0,0)$ and $\xi_3:=(5,0,0,0)$.
\begin{align}
\label{sys3}
\begin{cases}
\dot{x} &=-\varepsilon \prod\limits_{k=1}^5 (x-k)\\
&+y_1\Big(-y_1^2-x+\xi_1\Big)\Big(y_1^2+(x-(\xi_2-\frac{1}{2}))^2-\frac{1}{4}\Big)\Big(y_1^2+(x-(\xi_3-\frac{1}{2}))^2-\frac{1}{4}\Big)\\
&+ y_2\Big(-y_2^2-x+\xi_2\Big)\Big(y_2^2+(x-(\xi_1+\frac{1}{2}))^2-\frac{1}{4}\Big)\Big(y_2^2+(x-(\xi_3-\frac{1}{2}))^2-\frac{1}{4}\Big)\\
&+ y_3\Big(-y_3^2-x+\xi_3\Big)\Big(y_3^2+(x-(\xi_1+\frac{1}{2}))^2-\frac{1}{4}\Big)\Big(y_3^2+(x-(\xi_2+\frac{1}{2}))^2-\frac{1}{4}\Big)\\
\dot{y}_1 &= -y_1\Big(y_1^2-x+(\xi_1+\frac{1}{2})\Big)\\
\dot{y}_2 &= y_2\Big(-y_2^2-x+(\xi_2-\frac{1}{2})\Big)\Big(y_2^2-x+(\xi_2+\frac{1}{2})\Big)\\ 
\dot{y}_3 &= y_3\Big(-y_3^2-x+(\xi_3-\frac{1}{2})\Big)
\end{cases}
\end{align}
\end{proposition}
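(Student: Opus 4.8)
The plan is to exploit the coordinate structure of system~(\ref{sys3}). Since each $\dot y_j$ carries a factor $y_j$, every subspace $\{y_j=0\}$ is flow-invariant, and hence so are $\R^4_{y\ge 0}$, the $x$-axis $L=\{y_1=y_2=y_3=0\}$, and each coordinate plane $P_{0j}=\{y_i=0\colon i\ne j\}$. First I would record the flow on $L$, where $\dot x=-\varepsilon\prod_{k=1}^5(x-k)$: this is a gradient-like scalar equation with hyperbolic equilibria at $x=1,\dots,5$, of which $x=1,3,5$ (the prospective nodes $\xi_1,\xi_2,\xi_3$) are attracting within $L$ and $x=2,4$ are repelling, and there are no heteroclinic connections between the nodes inside $L$.

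Next I would linearise at the three nodes. Because each $\dot y_j$ depends only on $(x,y_j)$ and vanishes to first order at every node, the Jacobian at $\xi_k$ is block lower-triangular, so its eigenvalues are $\partial_x\dot x|_{\xi_k}$, which is negative and of order $\varepsilon$, together with the three numbers $\partial_{y_j}\dot y_j|_{\xi_k}$. Evaluating these with $\xi_1=1$, $\xi_2=3$, $\xi_3=5$, one finds at a node with $x$-coordinate $x_k$ the $y_1$-eigenvalue $x_k-\tfrac32$, the $y_2$-eigenvalue $(\tfrac52-x_k)(\tfrac72-x_k)$, and the $y_3$-eigenvalue $\tfrac92-x_k$; so $\xi_1$ is stable in $x$ and $y_1$ and unstable in $y_2,y_3$, $\xi_2$ is stable in $x$ and $y_2$ and unstable in $y_1,y_3$, and $\xi_3$ is stable in $x$ and $y_3$ and unstable in $y_1,y_2$. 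Each node is therefore a hyperbolic saddle with a two-dimensional stable manifold tangent to $P_{0k}$ and a two-dimensional unstable manifold; in particular $\dim W^u(\xi_i)+\dim W^s(\xi_j)=4$ for every ordered pair, which is what will give robustness. The six edges of the $n=3$ DNN graph (the complete digraph on three vertices) split into $[\xi_{k-1}\to\xi_k]$ and $[\xi_{k-2}\to\xi_k]$: each leaves its source along an unstable $y_k$-direction and must reach $\xi_k$ along its stable $y_k$-direction, so the connection lies in the invariant plane $P_{0k}$, and it is enough to produce it there.

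The core of the argument is the planar analysis inside each $P_{0k}$; I describe $P_{01}$, with coordinates $(x,y_1)$, the other two being analogous. Within $P_{01}$ the node $\xi_1$ is a sink, $\xi_2$ and $\xi_3$ are saddles whose one-dimensional unstable manifolds point into $\{y_1>0\}$, and $x=2,4$ are sources. I would check that for small $\varepsilon$ there are no equilibria with $y_1>0$: on the $\dot y_1$-nullcline $y_1^2=x-\xi_1-\tfrac12$ the three factors of the $y_1$-term of $\dot x$ are sign-definite, so $\dot x\ne 0$ there. Consequently there are no periodic orbits in $\{y_1>0\}$ (by the index theorem, since such an orbit would enclose a region free of equilibria, $\{y_1=0\}$ being invariant), and no nontrivial polycycles, the only saddles of $P_{01}$ on $L$ being $\xi_2,\xi_3$, whose stable sets in $P_{01}$ are arcs of $L$ fed only by the sources $x=2,4$. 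Poincar\'e--Bendixson then forces every forward-bounded trajectory of $P_{01}\cap\{y_1\ge 0\}$ to converge to an equilibrium, a trapping region being obtained from the leading-order behaviour of the vector field for large $|x|$ and $y_1$. It remains to show that the unique branch of $W^u(\xi_i)\cap P_{01}$ into $\{y_1>0\}$, for $i\in\{2,3\}$, accumulates on $\xi_1$ and not on $\xi_2$ or $\xi_3$, and for this I would show that $\dot x<0$ along it for every $x\in(1,5]$. This is precisely the role of the circle factors $\bigl(y_1^2+(x-(\xi_i-\tfrac12))^2-\tfrac14\bigr)$: each bounds a small disk through the node $\xi_i$ and the adjacent non-node equilibrium, and on the part of that circle with $y_1>0$ the matching factor of $\dot x$ vanishes while $\dot y_1$ is sign-definite, so the field points out of the disk and the disk cannot be entered; outside both disks and away from $L$, with $1<x<5$, the first factor $-y_1^2-x+\xi_1$ is negative and the two circle factors are positive, so the $y_1$-term of $\dot x$ is strictly negative and, for $\varepsilon$ small, dominates the $O(\varepsilon)$ term; and $\dot y_1>0$ whenever $y_1$ is small and $x>\xi_1+\tfrac12$ keeps the branch off $L$ while $x\ge\tfrac32$, whereas for $x<\tfrac32$ one has $\dot y_1<0$, so the branch funnels monotonically into $\xi_1$. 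This produces $[\xi_2\to\xi_1]$ and $[\xi_3\to\xi_1]$; the analogous computations in $P_{02}$ and $P_{03}$ — where $\dot y_2$ has two nullcline branches, matching $\xi_2$ receiving connections from both sides, and $\dot y_3$ only one — give the other four edges.

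Finally I would put the pieces together: the six trajectories just constructed, together with $\xi_1,\xi_2,\xi_3$, form a heteroclinic network realising the $n=3$ DNN graph, and it is robust because each connection is of saddle-to-sink type inside a flow-invariant coordinate plane (equivalently $\dim W^u(\xi_i)+\dim W^s(\xi_j)=\dim\R^4$, with the intersection forced by that plane). I expect the main obstacle to be exactly the sign bookkeeping in the previous paragraph — confirming that the unstable-manifold branches stay on the correct side of the small disks and keep $\dot x<0$ all the way down to $\xi_1$, above all for the ``long'' branches ($W^u(\xi_3)$ in $P_{01}$ and $W^u(\xi_1)$ in $P_{03}$) that must pass over an intermediate node without being captured by it or by the sources in between. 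This is what the circle- and parabola-shaped nullclines in the $\dot x$-equation are engineered to guarantee, and taking $\varepsilon$ small but fixed is what lets the coupling terms outweigh the $\prod_{k=1}^5(x-k)$ term off the $x$-axis.
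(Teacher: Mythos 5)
Your proposal is correct and follows essentially the same route as the paper's proof: flow-invariance of the $x$-axis and of the planes $P_{0j}$, linearisation showing $\xi_j$ is a sink in $P_{0j}$ and the other nodes are saddles, then a nullcline/sign analysis (parabola and circle factors, with the $\varepsilon$-term dominated off the axis) combined with Poincar\'e--Bendixson to force each unstable-manifold branch onto the intended sink, robustness coming from the saddle--sink nature of the planar connections. The only cosmetic difference is that you make explicit some points the paper leaves to the reader (the eigenvalue formulas, exclusion of periodic orbits and polycycles, forward boundedness); note only that your claim ``$\dot x<0$ along the branch for every $x\in(1,5]$'' need not hold pointwise in an $O(\varepsilon)$-thin strip just outside a blue circle where $-\varepsilon\prod_k(x-k)>0$, but this does not affect the argument since your barrier and limit-set reasoning already yield the connection.
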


\begin{proof}
The $x-$axis is invariant and the system has equilibria at $(x,0,0,0)$ with $x \in \{1,2,3,4,5\}$. Of these, the nodes are those with $x=1,3,5$, alternating with additional equilibria at $x = 2,4$. There are no equilibria outside the $x$-axis. Recall that, for $j=1,2,3$, we denote the plane spanned by $x$ and $y_j$ as $P_{0j}$. By construction, $P_{0j}$ is flow-invariant for all $j$. We want to show the existence of connections:
\begin{itemize}
	\item $[\xi_2 \to \xi_1]$ and $[\xi_3 \to \xi_1]$ in $P_{01}$
	\item $[\xi_3 \to \xi_2]$ and $[\xi_1 \to \xi_2]$ in $P_{02}$
	\item $[\xi_1 \to \xi_3]$ and $[\xi_2 \to \xi_3]$ in $P_{03}$
\end{itemize}

It is necessary to show that for each $j$, the node $\xi_j$ is a sink in $P_{0j}$ while the other two nodes are saddles; the remaining equilibria are unstable and ensure that all the nodes are stable along the $x$-axis. This is done by studying the $x$- and $y_j$-nullclines of system~(\ref{sys3}) in the respective planes. They are shown in Figure~\ref{fig:n=3}.

For the analysis of the nullclines we ignore the $\varepsilon$-term in the $x$-equation: for $\varepsilon=0$ the $x$-axis is a nullcline that consists only of equilibria and the remaining nullclines are circles/ellipses and parabolas as will be discussed below. For $\varepsilon>0$ but small, the only equilibria on the $x$-axis are those where $x \in \{1,2,3,4,5\}$ and all nullclines from the case with $\varepsilon=0$ are slightly perturbed. Note that this causes the nullcline lying on the $x$-axis for $\varepsilon=0$ to be below the $x$-axis except at the equilibria when $\varepsilon>0$. This is because for $y_j>0$ and small, the remaining terms in the $x$-equation are positive for $x<1$, negative for $x \in (1,2)$, positive for $x \in (2,3)$ and so on. The $\varepsilon$-term has the same sign in those regions and therefore the nullcline is moved below the axis, and thus out of our domain of definition.

By analyzing the Jacobian of the right hand side of system~\eqref{sys3}, the reader can check that all nodes $\xi_k$ have expanding and contracting directions such that they are stable along the $x$-axis while the remaining equilibria are unstable. In each $P_{0j}$, the node $\xi_j$ is a sink while the other two nodes are saddles, i.e.\ they have expanding $y_j$-direction.

We now look at $P_{01}$ and recommend the reader to look at Figure~\ref{fig:n=3} while following our arguments: the $x$-nullclines other than the $x$-axis are\footnote{In writing the vector fields in subsections~\ref{subsec:n=3}--\ref{subsec:n=6} we use bigger brackets to highlight the expressions that correspond to geometric figures in the drawing of the nullclines.}
\begin{itemize}
	\item a parabola at $\xi_1$ that opens to the left, given by the zeros of $-y_1^2-x+\xi_1$;
	\item a (semi)circle intersecting the $x$-axis at $\xi_2$ and $\xi_2-1$, given by the zeros of $y_1^2+(x-(\xi_2-\frac{1}{2}))^2-\frac{1}{4}$;
	\item a (semi)circle intersecting the $x$-axis at $\xi_3$ and $\xi_3-1$, given by the zeros of $y_1^2+(x-(\xi_3-\frac{1}{2}))^2-\frac{1}{4}$.
\end{itemize}
On these nullclines the vector field is vertical and flows as indicated by the arrows in Figure~\ref{fig:n=3}.

The $y_1$-nullclines are the $x$-axis and a parabola opening to the right at $\xi_1+\frac{1}{2}$, corresponding to the zeros of $(y_1^2-x+(\xi_1+\frac{1}{2}))$. The fact that for $\varepsilon > 0$ the $x$-axis is not a nullcline means that there is only the desired set of equilibria, namely $x \in \{3,4,5,6\}$. In particular, the intersection of the red parabola with the $x$-axis does not produce any additional equilibria.

Now let us follow the unstable manifold of $\xi_2$: it leaves $\xi_2$ above the semicircle and thus moves up and left until it crosses the $y_1$-nullcline horizontally. Subsequently, it moves down and left and has no choice but to connect to $\xi_1$ by the Poincar\'{e}-Bendixson theorem. The reasoning for the connection $[\xi_3 \to \xi_1]$ is exactly the same.

In $P_{02}$ the $x$-nullclines are very much as in $P_{01}$, except that the parabola is now at $\xi_2$. The $y_2$-nullclines are the horizontal axis and two parabolas opening in opposite directions, given by the zeros of $(-y_2^2-x+(\xi_2-\frac{1}{2}))(y_2^2-x+(\xi_2+\frac{1}{2}))$. The reasoning is similar: the unstable manifold of $\xi_1$ first moves up and right until it crosses the red $y_2$-nullcline horizontally and is forced to connect to $\xi_2$ by the $x$-nullcline (blue parabola) attached there. Similarly for $[\xi_3 \to \xi_2]$.

Finally, in $P_{03}$ the arguments work in the same way, as becomes clear from the respective picture in Figure~\ref{fig:n=3}.

Once again, we point out that the nullclines are not exactly semicircles and parabolas, but slightly perturbed by the $\varepsilon$-term. However, for $\varepsilon>0$ small enough, this does not affect our arguments. 
\end{proof}
The realisation we have given is robust with respect to perturbations that leave the coordinate planes invariant, since all connections are of saddle-sink type within these planes.

Note also that in several places different orientations for the parabolas may be chosen -- we give one example here of a realisation for the DNN graph, but ours is certainly not the only one. Indeed, some of the parabolas could even be chosen as vertical lines, which would further reduce the degree of the vector field. We stick with parabolas only in order to avoid intersections of nullclines in the planes outside the axes, even when $n$ increases, thus confining nodes and equilibria to the horizontal axis, as much as possible.

Another realisation of the DNN graph for $n=3$ occurs in the two-player replicator dynamics setup of the Rock-Paper-Scissors game, which is discussed e.g.\ in \cite{GdS-C2020}.

\begin{figure}
\flushright
    \subfigure{
    \begin{tikzpicture}[xscale=2, yscale=2]
  \put (310,50) {$P_{01}$}
  \put (-80,40) {$\begin{cases}[\xi_2 \to \xi_1]\\ [\xi_3 \to \xi_1] \end{cases}$}
  \draw[thick, ->] (0, 0) -- (6, 0) node[right] {$x$};
  \draw[thick, ->] (0, 0) -- (0, 2) node[above] {$y_1$};
  \draw[domain=0:1, smooth, very thick, variable=\y, dashed,blue]  plot ({-\y*\y+1}, {\y});
  \draw[domain=2:3, smooth, very thick, variable=\y, dashed,blue]  plot ({\y},{sqrt(-(\y-(3-1/2))^2+1/4)});
  \draw[domain=4:5, smooth, very thick, variable=\y, dashed,blue]  plot ({\y},{sqrt(-(\y-(5-1/2))^2+1/4)});
  \draw[domain=0:2, smooth, very thick, variable=\y, dotted,red]  plot ({\y*\y+3/2}, {\y});
    \draw[very thick, -latex] (0.5, 0.9) -- (0.5, 0.5);  
    \draw[very thick, -latex] (2.5, 0.3) -- (2.5, 0.7);  
    \draw[very thick, -latex] (4.5, 0.3) -- (4.5, 0.7);  
    \draw[very thick, -latex] (2.4, 0.85) -- (2, 0.85);  
    \draw[very thick, -latex] (3.9, 1.47) -- (3.5, 1.47);  
  \filldraw[black] (1,0) circle (1.5pt) node[below] {$\xi_1=1$};
  \draw[very thick] (1.5, 0.07) -- (1.5, -0.07) node[below] {$\frac{3}{2}$};
  \filldraw[black] (2,0) circle (1pt) node[below] {$2$};
  \filldraw[black] (3,0) circle (1.5pt) node[below] {$\xi_2=3$};
  \filldraw[black] (4,0) circle (1pt) node[below] {$4$};
  \filldraw[black] (5,0) circle (1.5pt) node[below] {$\xi_3=5$};
  \end{tikzpicture}
    }
    \subfigure{
    \begin{tikzpicture}[xscale=2, yscale=2]
  \put (310,50) {$P_{02}$}
  \put (-80,40) {$\begin{cases}[\xi_1 \to \xi_2]\\ [\xi_3 \to \xi_2] \end{cases}$}
  \draw[thick, ->] (0, 0) -- (6, 0) node[right] {$x$};
  \draw[thick, ->] (0, 0) -- (0, 2) node[above] {$y_2$};
  \draw[domain=0:1.5, smooth, very thick, variable=\y, dashed,blue]  plot ({-\y*\y+3}, {\y});
  \draw[domain=1:2, smooth, very thick, variable=\y, dashed,blue]  plot ({\y},{sqrt(-(\y-(2-1/2))^2+1/4)});
  \draw[domain=4:5, smooth, very thick, variable=\y, dashed,blue]  plot ({\y},{sqrt(-(\y-(5-1/2))^2+1/4)});
  \draw[domain=0:1.4, smooth, very thick, variable=\y, dotted,red]  plot ({\y*\y+7/2}, {\y});
  \draw[domain=0:1.5, smooth, very thick, variable=\y, dotted,red]  plot ({-\y*\y+5/2}, {\y});
    \draw[very thick, -latex] (1.5, 0.3) -- (1.5, 0.7);  
    \draw[very thick, -latex] (2.5, 0.9) -- (2.5, 0.5);  
    \draw[very thick, -latex] (4.5, 0.3) -- (4.5, 0.7);  
    \draw[very thick, -latex] (0.9, 1.2) -- (1.3, 1.2);  
    \draw[very thick, -latex] (5.1, 1.2) -- (4.7, 1.2);  
  \filldraw[black] (1,0) circle (1.5pt) node[below] {$\xi_1=1$};
  \filldraw[black] (2,0) circle (1pt) node[below] {$2$};
  \draw[very thick] (2.5, 0.07) -- (2.5, -0.07) node[below] {$\frac{5}{2}$};  
  \filldraw[black] (3,0) circle (1.5pt) node[below] {$\xi_2=3$};
  \draw[very thick] (3.5, 0.07) -- (3.5, -0.07) node[below] {$\frac{7}{2}$};
  \filldraw[black] (4,0) circle (1pt) node[below] {$4$};
  \filldraw[black] (5,0) circle (1.5pt) node[below] {$\xi_3=5$};
  \end{tikzpicture}
    }
    \subfigure{
    \begin{tikzpicture}[xscale=2, yscale=2]
  \put (310,50) {$P_{03}$}
  \put (-80,40) {$\begin{cases}[\xi_1 \to \xi_3]\\ [\xi_2 \to \xi_3] \end{cases}$}
  \draw[thick, ->] (0, 0) -- (6, 0) node[right] {$x$};
  \draw[thick, ->] (0, 0) -- (0, 2) node[above] {$y_3$};
  \draw[domain=0:2, smooth, very thick, variable=\y, dashed,blue]  plot ({-\y*\y+5}, {\y});
  \draw[domain=1:2, smooth, very thick, variable=\y, dashed,blue]  plot ({\y},{sqrt(-(\y-(2-1/2))^2+1/4)});
  \draw[domain=3:4, smooth, very thick, variable=\y, dashed,blue]  plot ({\y},{sqrt(-(\y-(4-1/2))^2+1/4)});
  \draw[domain=0:2, smooth, very thick, variable=\y, dotted,red]  plot ({-\y*\y+9/2}, {\y});
    \draw[very thick, -latex] (1.5, 0.3) -- (1.5, 0.7);  
    \draw[very thick, -latex] (3.5, 0.3) -- (3.5, 0.7);  
    \draw[very thick, -latex] (4.5, 0.9) -- (4.5, 0.5);  
    \draw[very thick, -latex] (2, 1.5) -- (2.4, 1.5);  
    \draw[very thick, -latex] (3.65, 0.8) -- (4.05, 0.8);  
  \filldraw[black] (1,0) circle (1.5pt) node[below] {$\xi_1=1$};
  \filldraw[black] (2,0) circle (1pt) node[below] {$2$};
  \filldraw[black] (3,0) circle (1.5pt) node[below] {$\xi_2=3$};
  \filldraw[black] (4,0) circle (1pt) node[below] {$4$};
  \draw[very thick] (4.5, 0.07) -- (4.5, -0.07) node[below] {$\frac{9}{2}$};
  \filldraw[black] (5,0) circle (1.5pt) node[below] {$\xi_3=5$};
  \end{tikzpicture}
    }
\caption{Nullclines off the axes for $x$ (blue/dashed) and $y_j$ (red/dotted) in the planes $P_{0j}$ for $n=3$. The arrows indicate the direction of the vector field across the nullclines.  In $P_{0j}$, blue parabolas open to the left and intersect the horizontal axis at $\xi_j$, while each blue semi-circle intersects the $x$-axis at $\xi_k$ for $k \neq j$ and at another equilibrium which is not a node. Red parabolas do not intersect the $x$-nullclines when $\varepsilon > 0$ and thus create no additional equilibria. In particular, their intersections with the $x$-axis are not equilibria. In $P_{02}$, two red parabolas are needed since the connections to $\xi_2$ come from different directions.}
\label{fig:n=3}
\end{figure}
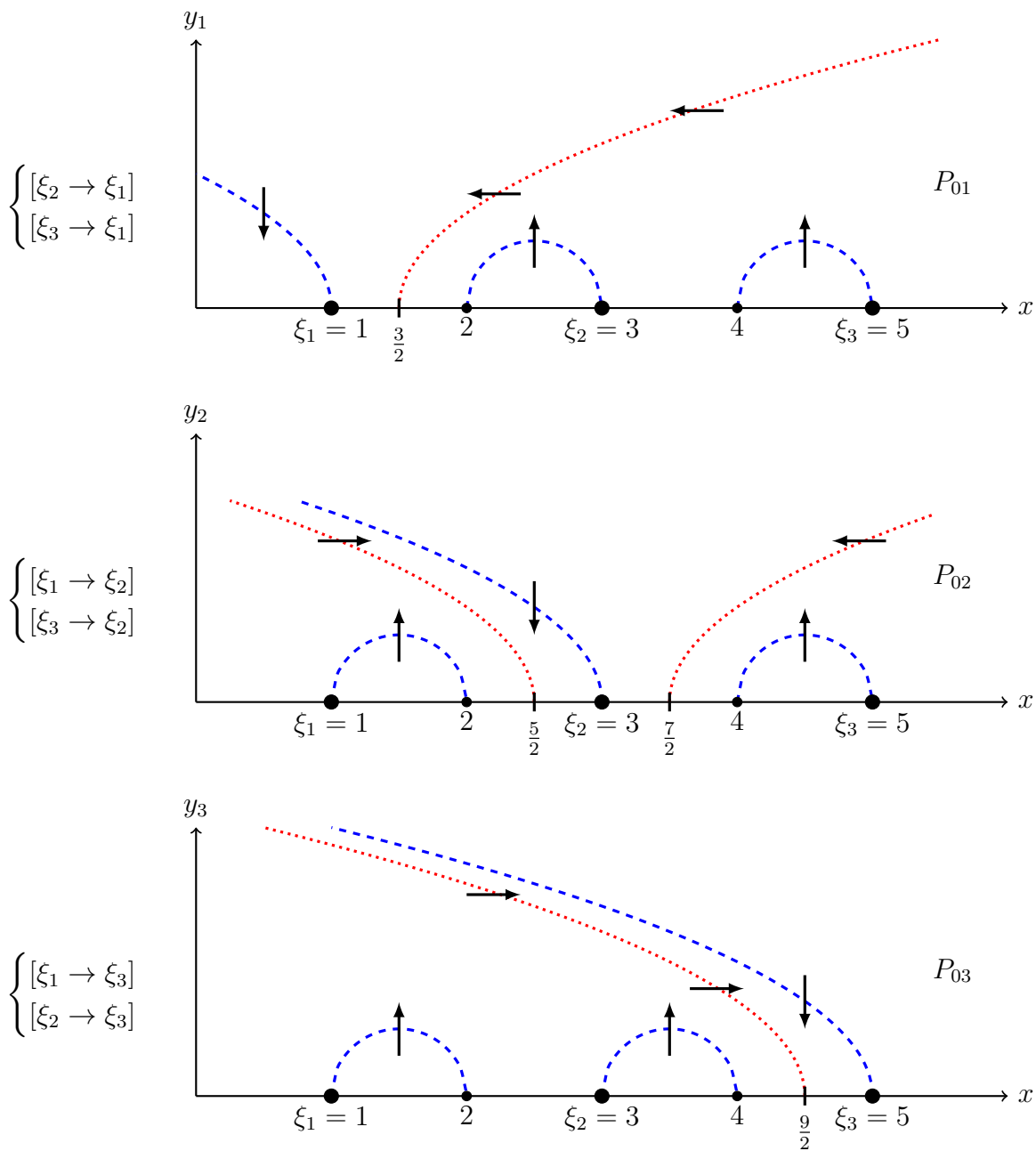

\subsection{A DNN network with four nodes}
\label{subsec:4}

We now introduce a new space dimension and move on to the case $n=4$. The system is set up analogously and we focus on what is new compared to the previous subsection: there are now nodes in a given plane which are not connected, so the nullcline structure must ensure that no undesired connections exist. This creates additional equilibria outside the $x$-axis.
\begin{proposition}
\label{prop_4}
For $\varepsilon>0$ sufficiently small, system~(\ref{sys4}) realises the DNN graph with $n=4$ as a heteroclinic network in $\R^5_{y \geq 0}$ between the nodes $\xi_k:=(2k-1,0,0,0,0)$, where $k=1,\ldots,4$.
\begin{align}
\label{sys4}
\begin{cases}
\dot{x} &=-\varepsilon \prod\limits_{k=1}^7 (x-k)\\
&+ y_1\Big(-y_1^2-x+\xi_1\Big)\Big(y_1^2+(x-(\xi_2-\frac{1}{2}))^2-\frac{1}{4}\Big)\Big(y_1^2+(x-(\xi_3-\frac{1}{2}))^2-\frac{1}{4}\Big)\\
& \quad \times \Big(y_1^2+(x-(\xi_4-\frac{1}{2}))^2-\frac{1}{4}\Big)\\
&+ y_2\Big(-y_2^2-x+\xi_2\Big)\Big(y_2^2+(x-(\xi_1+\frac{1}{2}))^2-\frac{1}{4}\Big)\Big(y_2^2+(x-(\xi_3-\frac{1}{2}))^2-\frac{1}{4}\Big)\\
& \quad \times \Big(y_2^2+(x-(\xi_4-\frac{1}{2}))^2-\frac{1}{4}\Big)\\
&+ y_3\Big(-y_3^2-x+\xi_3\Big)\Big(y_3^2+(x-(\xi_1+\frac{1}{2}))^2-\frac{1}{4}\Big)\Big(y_3^2+(x-(\xi_2+\frac{1}{2}))^2-\frac{1}{4}\Big)\\
& \quad \times \Big(y_3^2+(x-(\xi_4-\frac{1}{2}))^2-\frac{1}{4}\Big)\\
&+ y_4\Big(-y_4^2-x+\xi_4\Big)\Big(y_4^2+(x-(\xi_1+\frac{1}{2}))^2-\frac{1}{4}\Big)\Big(y_4^2+(x-(\xi_2+\frac{1}{2}))^2-\frac{1}{4}\Big)\\
& \quad \times \Big(y_4^2+(x-(\xi_3+\frac{1}{2}))^2-\frac{1}{4}\Big)\\
\dot{y}_1 &= -y_1\Big(y_1^2-x+(\xi_1+\frac{1}{2})\Big)\Big(4y_1^2+(x-(\xi_2-\frac{1}{2}))^2-\frac{1}{2}\Big)\\
\dot{y}_2 &= y_2\Big(-y_2^2-x+(\xi_2-\frac{1}{2})\Big)\Big(y_2^2-x+(\xi_2+\frac{1}{2})\Big)\Big(4y_2^2+(x-(\xi_3-\frac{1}{2}))^2-\frac{1}{2}\Big)\\ 
\dot{y}_3 &= y_3\Big(-y_3^2-x+(\xi_3-\frac{1}{2})\Big)\Big(-y_3^2-x+(\xi_3+\frac{1}{2})\Big)\Big(4y_3^2+(x-(\xi_4-\frac{1}{2}))^2-\frac{1}{2}\Big)\\
\dot{y}_4 &= y_4\Big(-y_4^2-x+(\xi_4-\frac{1}{2})\Big)\Big(4y_4^2+(x-(\xi_1+\frac{1}{2}))^2-\frac{1}{2}\Big)
\end{cases}
\end{align}
\end{proposition}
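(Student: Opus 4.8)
The plan is to reuse the proof scheme of Proposition~\ref{prop_3} almost verbatim, plane by plane, and to handle the three features that the statement flags as new for $n=4$: in each plane $P_{0j}$ there is now an extra $x$-nullcline semicircle, an extra $y_j$-nullcline ellipse $4y_j^2+(x-c_j)^2=\tfrac12$ concentric with that semicircle and with $c_j$ lying within $\tfrac12$ of the unique node $\xi_m$ that is \emph{not} joined to $\xi_j$ in the DNN graph, and the off-axis equilibria these two curves produce. As in Proposition~\ref{prop_3} I would first record the skeleton: the $x$-axis is flow-invariant with equilibria exactly at $x\in\{1,\dots,7\}$ once $0<\varepsilon$ is small (the nodes being the odd integers), each $P_{0j}$ is flow-invariant, and for $\varepsilon>0$ the horizontal part of the $\varepsilon=0$ $x$-nullcline is pushed strictly below the $x$-axis and plays no further role.

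Next comes the linearisation bookkeeping. Since $\partial\dot y_j/\partial x=0$ on the $x$-axis, the Jacobian at an axis point is triangular inside each $P_{0j}$, so transverse stability is read off from the sign of $\partial\dot y_j/\partial y_j$ there. Expanding the factored vector field shows this sign is negative at $\xi_j$, negative at the unconnected node $\xi_m$ -- which is exactly the purpose of the ellipse factor, flipping the sign of $\dot y_j$ near $\xi_m$ while remaining positive at the two other nodes, which lie outside the ellipse -- and positive at the two nodes that do connect to $\xi_j$. Combined with the sign pattern of the $\varepsilon$-term along the $x$-axis (negative derivative at every odd integer, positive at every even one, exactly as for $n=3$), this shows that in $P_{0j}$ both $\xi_j$ and $\xi_m$ are sinks while the remaining two nodes are saddles with a one-dimensional expanding $y_j$-direction. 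Setting $\varepsilon=0$, the off-axis equilibria are the intersections of $y_j$-nullclines with $x$-nullclines; a short elimination, as in Proposition~\ref{prop_3}, shows that the only such intersections are the two points where the ellipse meets the concentric extra semicircle, and that no equilibrium has two or more positive $y$-coordinates. Linearisation identifies these two equilibria as saddles on the basin boundary of $\xi_m$, and they persist for $\varepsilon>0$ small because the intersections are transverse.

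The core of the argument is the planar phase-portrait analysis in each $P_{0j}$, run with the Poincar\'e--Bendixson theorem exactly as in Proposition~\ref{prop_3}, with one added check: the unstable manifolds of the two connecting nodes must stay out of the basin of the extra sink $\xi_m$. The key point is that the new ellipse is \emph{shorter} than the semicircles -- its maximal height $\tfrac{1}{2\sqrt2}$ is below the semicircle radius $\tfrac12$ -- and the flow just outside it points away from it; hence the basin of $\xi_m$ is confined to a small lens around the ellipse, bounded in part by the two off-axis saddles and their stable manifolds, whereas each connecting node's unstable manifold leaves its own (taller) semicircle, is carried up and over that lens by the $x$- and $y_j$-nullclines, crosses the $y_j$-parabola, and descends to $\xi_j$ just as in Proposition~\ref{prop_3}. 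The extra node $\xi_m$, being a sink in $P_{0j}$, produces no connection there, and Poincar\'e--Bendixson rules out limit cycles and any other $\omega$-limit set. Robustness follows as before: every realised connection is of saddle--sink type inside an invariant coordinate plane.

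I expect this last step -- showing that the unstable manifolds of the connecting nodes clear the shorter ellipse barrier and avoid both the basin and the two boundary saddles of the unconnected node $\xi_m$ -- to be the main obstacle, since it is the first occurrence of the ``several incoming connections plus one unconnected node in a single plane'' mechanism on which the general-$n$ construction of section~\ref{sec_n} is built. Everything else reduces to sign bookkeeping in the factored vector field and a fourfold repetition of the $n=3$ argument.
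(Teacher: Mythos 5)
Your planar analysis (nullclines, the concentric ellipse making the unconnected node $\xi_m$ a sink in $P_{0j}$, the two off-axis equilibria from the ellipse--circle intersection, and Poincar\'e--Bendixson to force the saddle--sink connections over the ``lens'') is essentially the paper's argument, and your linearisation bookkeeping matches it. But there is a genuine gap: you only exclude the unwanted connections \emph{inside} the coordinate planes. The proposition asserts that the system \emph{realises} the DNN graph, i.e.\ a one-to-one correspondence between edges and heteroclinic connections, and for $n=4$ the graph is no longer all-to-all, so one must also rule out, say, a connection $[\xi_2\to\xi_1]$ occurring \emph{outside} $P_{01}$. This is a real issue because $W^u(\xi_2)$ is two-dimensional (expanding in both $y_3$ and $y_4$), so almost all of it leaves every coordinate plane and lives in the three-dimensional invariant subspace $\{y_1=y_2=0\}$; nothing in a plane-by-plane Poincar\'e--Bendixson argument, nor your remark that no equilibrium has two positive $y$-coordinates, controls where that two-dimensional manifold accumulates. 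The paper devotes a separate step to exactly this (``It remains to show that $\xi_2$ does not connect to $\xi_1$ outside of $P_{01}$\dots''): it confines $W^u(\xi_2)$ to a three-dimensional invariant subspace and observes that, for the restricted system, the stable manifold of $\xi_1$ is two-dimensional and contained in an invariant coordinate plane, so no intersection with $W^u(\xi_2)$ can produce a connection off $P_{01}$; analogous arguments dispose of the other missing edges $[\xi_3\to\xi_2]$, $[\xi_4\to\xi_3]$, $[\xi_1\to\xi_4]$.

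To repair your write-up you would add precisely such a dimension-count/invariance argument: for each non-edge $[\xi_a\to\xi_b]$, note that $W^u(\xi_a)$ lies in the invariant subspace spanned by the $x$-axis and the two $y$-directions of the outgoing connections of $\xi_a$, and that within this subspace $W^s(\xi_b)$ coincides locally with an invariant coordinate plane in which your own planar analysis already shows the relevant one-dimensional unstable manifold connects elsewhere (or $\xi_b$ is blocked by the ellipse). Without this step the claim that the network realises the DNN graph, rather than a graph with extra edges, is not established; the rest of your proposal, including the height comparison $\tfrac{1}{2\sqrt 2}<\tfrac12$ and the lens picture, is fine and is at the same level of rigour as the paper.
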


\begin{proof}
The reader can verify that the system has the required equilibria and it is possible to check that their local stability properties, derived from the Jacobian matrix at each node, are as needed. The nodes $\xi_k$ are such that $x=1,3,5,7$ and alternate with equilibria such that $x=2,4,6$. Along the $x$-axis only the nodes are stable. In each plane $P_{0j}$ the node $\xi_j$ is a sink, while the nodes connecting to it are saddles. There is a fourth node that does not connect to any node in $P_{0j}$, which is a sink, see Figure~\ref{fig:zoom}. 

As before, we study the nullclines to prove the existence of the desired connections:
\begin{itemize}
	\item $[\xi_3 \to \xi_1]$ and $[\xi_4 \to \xi_1]$ in $P_{01}$
	\item $[\xi_4 \to \xi_2]$ and $[\xi_1 \to \xi_2]$ in $P_{02}$
	\item $[\xi_1 \to \xi_3]$ and $[\xi_2 \to \xi_3]$ in $P_{03}$
	\item $[\xi_2 \to \xi_4]$ and $[\xi_3 \to \xi_4]$ in $P_{04}$.
\end{itemize}
All nullclines in the respective planes are shown in Figure~\ref{fig:n=4}. The existence of connections can be deduced just as in the proof of Proposition~\ref{prop_3}. Since the connection structure is not all-to-all anymore we need to prevent connections that are not prescribed by the graph from actually occuring. Additional $y_j$-nullclines are introduced to this end. We illustrate the situation in $P_{01}$ with a close-up in Figure~\ref{fig:zoom}: here, the unwanted connection $[\xi_2 \to \xi_1]$ is prevented by the red ellipse which corresponds to
$$4y_1^2+\left(x-\left(\xi_2-\frac{1}{2}\right)\right)^2-\frac{1}{2}=0.$$
This $y_1$-nullcline is also crucial for keeping $\xi_2$ locally stable in $P_{01}$. Its intersection with the circle which is an $x$-nullcline creates two unstable equilibria outside the $x$-axis.

It remains to show that $\xi_2$ does not connect to $\xi_1$ outside of $P_{01}$. Note that the unstable manifold $W^u(\xi_2)$ is two-dimensional and contained in the three-dimensional invariant space spanned by $P_{01}$ and $P_{03}$. For the restriction of system~\eqref{sys4} to this space, $\xi_1$ is a hyperbolic equilibrium with a two-dimensional stable manifold that is contained in $P_{01}$. Since this is an invariant subspace, there can be no incoming connection to $\xi_1$ outside of $P_{01}$.

Analogous ellipses are needed in the other planes, and another red parabola is added in $P_{03}$ to control the sign of $\dot{y}_3$. The rest of the proof follows in the same way as in subsection~\ref{subsec:n=3}.
\end{proof}

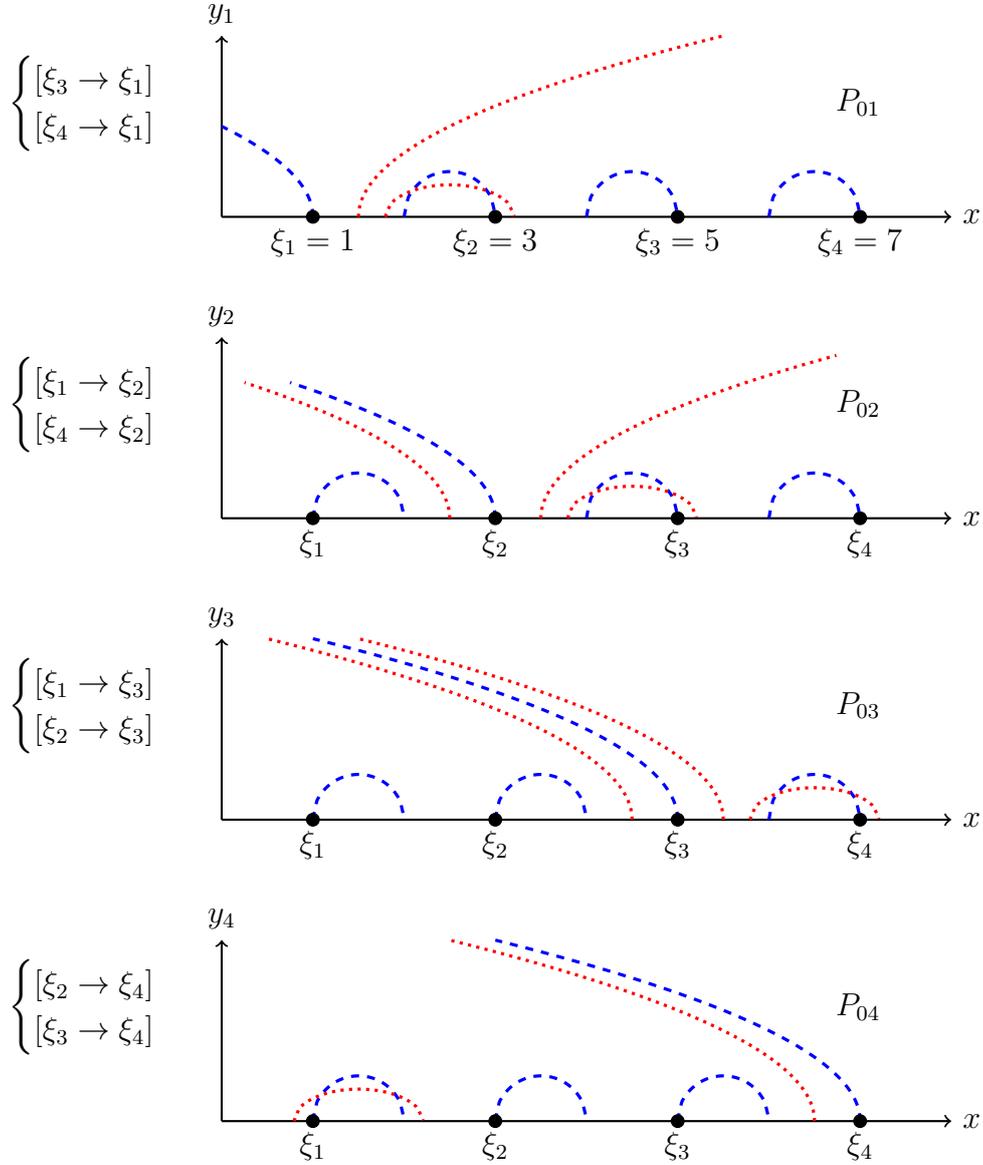
\begin{figure}
\flushright
    \subfigure{
    \begin{tikzpicture}[xscale=1.2, yscale=1.2]
  \put (230,40) {$P_{01}$}
  \put (-80,40) {$\begin{cases}[\xi_3 \to \xi_1]\\ [\xi_4 \to \xi_1] \end{cases}$}
  \draw[thick, ->] (0, 0) -- (8, 0) node[right] {$x$};
  \draw[thick, ->] (0, 0) -- (0, 2) node[above] {$y_1$};
  \draw[domain=0:1, smooth, very thick, variable=\y, dashed, blue]  plot ({-\y*\y+1}, {\y});
  \draw[domain=2:3, smooth, very thick, variable=\y, dashed, blue]  plot ({\y},{sqrt(-(\y-(3-1/2))^2+1/4)});
  \draw[domain=4:5, smooth, very thick, variable=\y, dashed, blue]  plot ({\y},{sqrt(-(\y-(5-1/2))^2+1/4)});
  \draw[domain=6:7, smooth, very thick, variable=\y, dashed, blue]  plot ({\y},{sqrt(-(\y-(7-1/2))^2+1/4)});
  \draw[domain=2.5-1/sqrt(2):2.5+1/sqrt(2), smooth, very thick, variable=\y, dotted, red]  plot ({\y},{sqrt(1/4*(-(\y-(3-1/2))^2+1/2))});
  \draw[domain=0:2, smooth, very thick, variable=\y, dotted, red]  plot ({\y*\y+3/2}, {\y});
  \filldraw[black] (1,0) circle (2pt) node[below] {$\xi_1=1$};
  \filldraw[black] (3,0) circle (2pt) node[below] {$\xi_2=3$};
  \filldraw[black] (5,0) circle (2pt) node[below] {$\xi_3=5$};
  \filldraw[black] (7,0) circle (2pt) node[below] {$\xi_4=7$};
  \end{tikzpicture}
    }
    \subfigure{
    \begin{tikzpicture}[xscale=1.2, yscale=1.2]
  \put (230,40) {$P_{02}$}
  \put (-80,40) {$\begin{cases}[\xi_1 \to \xi_2]\\ [\xi_4 \to \xi_2] \end{cases}$}
  \draw[thick, ->] (0, 0) -- (8, 0) node[right] {$x$};
  \draw[thick, ->] (0, 0) -- (0, 2) node[above] {$y_2$};
  \draw[domain=0:1.5, smooth, very thick, variable=\y, dashed, blue]  plot ({-\y*\y+3}, {\y});
  \draw[domain=1:2, smooth, very thick, variable=\y, dashed, blue]  plot ({\y},{sqrt(-(\y-(2-1/2))^2+1/4)});
  \draw[domain=4:5, smooth, very thick, variable=\y, dashed, blue]  plot ({\y},{sqrt(-(\y-(5-1/2))^2+1/4)});
  \draw[domain=6:7, smooth, very thick, variable=\y, dashed, blue]  plot ({\y},{sqrt(-(\y-(7-1/2))^2+1/4)});
  \draw[domain=4.5-1/sqrt(2):4.5+1/sqrt(2), smooth, very thick, variable=\y, dotted, red]  plot ({\y},{sqrt(1/4*(-(\y-(5-1/2))^2+1/2))});
  \draw[domain=0:1.8, smooth, very thick, variable=\y, dotted, red]  plot ({\y*\y+7/2}, {\y});
  \draw[domain=0:1.5, smooth, very thick, variable=\y, dotted, red]  plot ({-\y*\y+5/2}, {\y});
  \filldraw[black] (1,0) circle (2pt) node[below] {$\xi_1$};
  \filldraw[black] (3,0) circle (2pt) node[below] {$\xi_2$};
  \filldraw[black] (5,0) circle (2pt) node[below] {$\xi_3$};
  \filldraw[black] (7,0) circle (2pt) node[below] {$\xi_4$};
  \end{tikzpicture}
    }
    \subfigure{
     \begin{tikzpicture}[xscale=1.2, yscale=1.2]
  \put (230,40) {$P_{03}$}
  \put (-80,40) {$\begin{cases}[\xi_1 \to \xi_3]\\ [\xi_2 \to \xi_3] \end{cases}$}
  \draw[thick, ->] (0, 0) -- (8, 0) node[right] {$x$};
  \draw[thick, ->] (0, 0) -- (0, 2) node[above] {$y_3$};
  \draw[domain=0:2, smooth, very thick, variable=\y, dashed, blue]  plot ({-\y*\y+5}, {\y});
  \draw[domain=1:2, smooth, very thick, variable=\y, dashed, blue]  plot ({\y},{sqrt(-(\y-(2-1/2))^2+1/4)});
  \draw[domain=3:4, smooth, very thick, variable=\y, dashed, blue]  plot ({\y},{sqrt(-(\y-(4-1/2))^2+1/4)});
  \draw[domain=6:7, smooth, very thick, variable=\y, dashed, blue]  plot ({\y},{sqrt(-(\y-(7-1/2))^2+1/4)});
  \draw[domain=6.5-1/sqrt(2):6.5+1/sqrt(2), smooth, very thick, variable=\y, dotted, red]  plot ({\y},{sqrt(1/4*(-(\y-(7-1/2))^2+1/2))});
  \draw[domain=0:2, smooth, very thick, variable=\y, dotted, red]  plot ({-\y*\y+11/2}, {\y});
  \draw[domain=0:2, smooth, very thick, variable=\y, dotted, red]  plot ({-\y*\y+9/2}, {\y});
  \filldraw[black] (1,0) circle (2pt) node[below] {$\xi_1$};
  \filldraw[black] (3,0) circle (2pt) node[below] {$\xi_2$};
  \filldraw[black] (5,0) circle (2pt) node[below] {$\xi_3$};
  \filldraw[black] (7,0) circle (2pt) node[below] {$\xi_4$};
  \end{tikzpicture}
    }
    \subfigure{
    \begin{tikzpicture}[xscale=1.2, yscale=1.2]
  \put (230,40) {$P_{04}$}
  \put (-80,40) {$\begin{cases}[\xi_2 \to \xi_4]\\ [\xi_3 \to \xi_4] \end{cases}$}
  \draw[thick, ->] (0, 0) -- (8, 0) node[right] {$x$};
  \draw[thick, ->] (0, 0) -- (0, 2) node[above] {$y_4$};
  \draw[domain=0:2, smooth, very thick, variable=\y, dashed, blue]  plot ({-\y*\y+7}, {\y});
  \draw[domain=1:2, smooth, very thick, variable=\y, dashed, blue]  plot ({\y},{sqrt(-(\y-(2-1/2))^2+1/4)});
  \draw[domain=3:4, smooth, very thick, variable=\y, dashed, blue]  plot ({\y},{sqrt(-(\y-(4-1/2))^2+1/4)});
  \draw[domain=5:6, smooth, very thick, variable=\y, dashed, blue]  plot ({\y},{sqrt(-(\y-(6-1/2))^2+1/4)});
  \draw[domain=1.5-1/sqrt(2):1.5+1/sqrt(2), smooth, very thick, variable=\y, dotted, red]  plot ({\y},{sqrt(1/4*(-(\y-(2-1/2))^2+1/2))});
  \draw[domain=0:2, smooth, very thick, variable=\y, dotted, red]  plot ({-\y*\y+7-1/2}, {\y});
  \filldraw[black] (1,0) circle (2pt) node[below] {$\xi_1$};
  \filldraw[black] (3,0) circle (2pt) node[below] {$\xi_2$};
  \filldraw[black] (5,0) circle (2pt) node[below] {$\xi_3$};
  \filldraw[black] (7,0) circle (2pt) node[below] {$\xi_4$};
  \end{tikzpicture}
    }
\caption{Nullclines off the axes for $x$ (blue/dashed) and $y_j$ (red/dotted) in the planes $P_{0j}$ for $n=4$. A red ellipse is introduced in each plane, rendering $\xi_{j+1}$ stable in $P_{0j}$ to prevent unwanted connections between the nodes, see Figure~\ref{fig:zoom} for a close-up. In $P_{03}$ a new red parabola is needed to control the sign of $\dot{y}_3$. Red parabolas always open to the left except in $P_{01}$ and $P_{02}$. The direction of flow across the nullclines can be deduced from the arrows in Figures~\ref{fig:n=3} and \ref{fig:zoom}.}
\label{fig:n=4}
\end{figure}

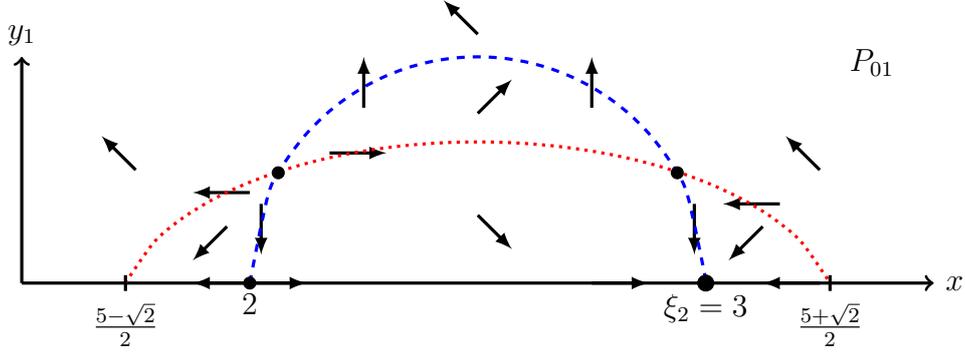
\begin{figure}
\centering
    \begin{tikzpicture}[xscale=1.5, yscale=1.5]
  \put (310,80) {$P_{01}$}
  \draw[very thick, ->] (0, 0) -- (8, 0) node[right] {$x$};
  \draw[very thick, ->] (0, 0) -- (0, 2) node[above] {$y_1$};
  \draw[very thick, -latex] (1, 1) -- (0.7, 1.3);  
  \draw[very thick, -latex] (4, 1.5) -- (4.3, 1.8);
  \draw[very thick, -latex] (4, 0.6) -- (4.3, 0.3);
  \draw[very thick, -latex] (7, 1) -- (6.7, 1.3);
  \draw[very thick, -latex] (6.5, 0.5) -- (6.2, 0.2);
  \draw[very thick, -latex] (4, 2.2) -- (3.7, 2.5);
  \draw[very thick, -latex] (1.8, 0.5) -- (1.5, 0.2);
  \draw[very thick, -latex] (3, 1.55) -- (3, 2);
  \draw[very thick, -latex] (5, 1.55) -- (5, 2);  
  \draw[very thick, -latex] (5.9, 0.7) -- (5.9, 0.25);  
  \draw[very thick, -latex] (2.1, 0.7) -- (2.1, 0.25);  
  \draw[very thick, -latex] (2.7, 1.15) -- (3.2, 1.15);  
  \draw[very thick, -latex] (2, 0.8) -- (1.5, 0.8);  
  \draw[very thick, -latex] (6.65, 0.7) -- (6.15, 0.7);  
  \draw[domain=2:6, smooth, very thick, variable=\y, dashed, blue]  plot ({\y},{sqrt(-(\y-4)^2+4)});
  \draw[domain=4-sqrt(86/9):4+sqrt(86/9), smooth, very thick, variable=\y, dotted, red]  plot ({\y},{sqrt(1/4*(-(\y-4)^2+10))-1/3}); 
  \filldraw[black] (6,0) circle (2pt) node[below] {$\xi_2=3$};
  \filldraw[black] (2,0) circle (1.5pt) node[below] {$2$};
  \filldraw[black] (2.25,0.975) circle (1.5pt) node[below] {};
  \filldraw[black] (5.75,0.975) circle (1.5pt) node[below] {};
  \draw[very thick, -latex] (2, 0) -- (1.5, 0);
  \draw[very thick, -latex] (2, 0) -- (2.5, 0);
  \draw[very thick, -latex] (5, 0) -- (5.5, 0);
  \draw[very thick, -latex] (7, 0) -- (6.5, 0);
  \draw[very thick] (0.91, 0.07) -- (0.91, -0.07) node[below] {$\frac{5-\sqrt{2}}{2}$};
  \draw[very thick] (7.09, 0.07) -- (7.09, -0.07) node[below] {$\frac{5+\sqrt{2}}{2}$};
  \end{tikzpicture}
\caption{Close-up of $P_{01}$ near $\xi_2$. Equilibria that are not nodes are shown as smaller black dots. Arrows indicate the direction of the flow across the nullclines and in the different regions bounded by them. The node $\xi_2$ is a sink. Note that the equilibrium at $x=2$ is a saddle, unlike the other equilibria on the $x$-axis in Figure~\ref{fig:n=4} that are not inside a red ellipse.}
\label{fig:zoom}
\end{figure}

\subsection{A DNN network with five nodes}\label{subsec:n=5}

Moving on to $n=5$ we need another space dimension and implement a new type of ellipse as $y_j$-nullclines to block off several nodes simultaneously.
\begin{proposition}
\label{prop_5}
For $\varepsilon>0$ sufficiently small, system~(\ref{sys5}) realises the DNN graph with $n=5$ as a heteroclinic network in $\R^6_{y \geq 0}$ between the nodes $\xi_k:=(2k-1,0,0,0,0,0)$, where $k=1,\ldots,5$.
\begin{align}
\label{sys5}
\begin{cases}
\dot{x} &=  -\varepsilon \prod\limits_{k=1}^9 (x-k)\\
&+ y_1\Big(-y_1^2-x+\xi_1\Big)\Big(y_1^2+(x-(\xi_2-\frac{1}{2}))^2-\frac{1}{4}\Big)\Big(y_1^2+(x-(\xi_3-\frac{1}{2}))^2-\frac{1}{4}\Big)\\
&\quad \times \Big(y_1^2+(x-(\xi_4-\frac{1}{2}))^2-\frac{1}{4}\Big)\Big(y_1^2+(x-(\xi_5-\frac{1}{2}))^2-\frac{1}{4}\Big)\\
&+ y_2\Big(-y_2^2-x+\xi_2\Big)\Big(y_2^2+(x-(\xi_1+\frac{1}{2}))^2-\frac{1}{4}\Big)\Big(y_2^2+(x-(\xi_3-\frac{1}{2}))^2-\frac{1}{4}\Big)\\
& \quad \times \Big(y_2^2+(x-(\xi_4-\frac{1}{2}))^2-\frac{1}{4}\Big)\Big(y_2^2+(x-(\xi_5-\frac{1}{2}))^2-\frac{1}{4}\Big)\\
&+ y_3\Big(-y_3^2-x+\xi_3\Big)\Big(y_3^2+(x-(\xi_1+\frac{1}{2}))^2-\frac{1}{4}\Big)\Big(y_3^2+(x-(\xi_2+\frac{1}{2}))^2-\frac{1}{4}\Big)\\
& \quad \times \Big(y_3^2+(x-(\xi_4-\frac{1}{2}))^2-\frac{1}{4}\Big)\Big(y_3^2+(x-(\xi_5-\frac{1}{2}))^2-\frac{1}{4}\Big)\\
&+ y_4\Big(-y_4^2-x+\xi_4\Big)\Big(y_4^2+(x-(\xi_1+\frac{1}{2}))^2-\frac{1}{4}\Big)\Big(y_4^2+(x-(\xi_2+\frac{1}{2}))^2-\frac{1}{4}\Big)\\
& \quad \times \Big(y_4^2+(x-(\xi_3+\frac{1}{2}))^2-\frac{1}{4}\Big)\Big(y_4^2+(x-(\xi_5-\frac{1}{2}))^2-\frac{1}{4}\Big)\\
&+ y_5\Big(-y_5^2-x+\xi_5\Big)\Big(y_5^2+(x-(\xi_1+\frac{1}{2}))^2-\frac{1}{4}\Big)\Big(y_5^2+(x-(\xi_2+\frac{1}{2}))^2-\frac{1}{4}\Big)\\
& \quad \times \Big(y_5^2+(x-(\xi_3+\frac{1}{2}))^2-\frac{1}{4}\Big)\Big(y_5^2+(x-(\xi_4+\frac{1}{2}))^2-\frac{1}{4}\Big)\\
\dot{y}_1 &= -y_1\Big(y_1^2-x+(\xi_1+\frac{1}{2})\Big)\Big(16y_1^2+(x-(\xi_2+\frac{1}{2}))^2-3\Big)\\
\dot{y}_2 &= y_2\Big(-y_2^2-x+(\xi_2-\frac{1}{2})\Big)\Big(y_2^2-x+(\xi_2+\frac{1}{2})\Big)\Big(16y_2^2+(x-(\xi_3+\frac{1}{2}))^2-3\Big)\\
\dot{y}_3 &= y_3\Big(-y_3^2-x+(\xi_3-\frac{1}{2})\Big)\Big(-y_3^2-x+(\xi_3+\frac{1}{2})\Big)\Big(16y_3^2+(x-(\xi_4+\frac{1}{2}))^2-3\Big)\\
\dot{y}_4 &= y_4\Big(-y_4^2-x+(\xi_4-\frac{1}{2})\Big)\Big(-y_4^2-x+(\xi_4+\frac{1}{2})\Big)\\
& \quad \times \Big(4y_4^2+(x-(\xi_1+\frac{1}{2}))^2-\frac{1}{2}\Big)\Big(4y_4^2+(x-(\xi_5-\frac{1}{2}))^2-\frac{1}{2}\Big)\\
\dot{y}_5 &= y_5\Big(-y_5^2-x+(\xi_5-\frac{1}{2})\Big)\Big(16y_5^2+(x-(\xi_2-\frac{1}{2}))^2-3\Big)
\end{cases}
\end{align}
\end{proposition}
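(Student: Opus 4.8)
The plan is to follow closely the template of Propositions~\ref{prop_3} and~\ref{prop_4}, the only genuinely new ingredient being the ``wide'' $y_j$-nullcline ellipses $16y_j^2+(x-c)^2-3=0$ appearing in~\eqref{sys5}. I would first record the equilibria: the $x$-axis is invariant and, once the perturbation $-\varepsilon\prod_{k=1}^{9}(x-k)$ is active, the only equilibria on it are at $x\in\{1,\dots,9\}$, the nodes $\xi_1,\dots,\xi_5$ sitting at the odd values and alternating with auxiliary equilibria at $x=2,4,6,8$, while every further equilibrium lies off the axis, exactly where a blocking ellipse meets an $x$-nullcline. From the Jacobian of~\eqref{sys5} I would then read off the local configuration that makes everything work, as in Proposition~\ref{prop_3}: along the $x$-axis only the nodes are stable while the even-$x$ equilibria repel; in each plane $P_{0j}$ the node $\xi_j$ is a sink, the nodes $\xi_{j-1}$ and $\xi_{j-2}$ that should connect to it are saddles with expanding $y_j$-direction, and the remaining two nodes are sinks inside $P_{0j}$. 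In particular the expanding directions at $\xi_j$ are exactly $y_{j+1}$ and $y_{j+2}$, so $W^u(\xi_j)$ is two-dimensional; and, as in Proposition~\ref{prop_3}, turning on $\varepsilon>0$ small perturbs the whole picture only slightly and leaves exactly the required equilibria.

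The heart of the proof is the plane-by-plane construction of the prescribed connections by the Poincar\'e--Bendixson argument of Proposition~\ref{prop_3}. Ignoring the $\varepsilon$-term, in $P_{0j}$ the $x$-nullclines off the axis are a left-opening parabola rooted at $\xi_j$ together with semicircles through the other nodes, while the $y_j$-nullclines are the axis, one or two parabolas creating the sink at $\xi_j$, and the blocking ellipse(s); the unstable manifolds of $\xi_{j-1}$ and $\xi_{j-2}$ inside $P_{0j}$ are then forced to reach $\xi_j$, while the two blocked nodes emit no connection there since they are planar sinks. The new point is the blocking geometry. In $P_{01},P_{02},P_{03}$ and $P_{05}$ the two nodes that must \emph{not} connect to $\xi_j$ are adjacent on the $x$-axis and lie on one side of $\xi_j$, so a single ellipse $16y_j^2+(x-c)^2-3=0$ -- of half-width $\sqrt3$ in $x$ but height only $\sqrt3/4$, centred between the two blocked nodes -- encloses both of them while staying clear of $\xi_j$ and of the two saddle-nodes, blocking both at once; in $P_{04}$, however, the nodes to be blocked are $\xi_1$ and $\xi_5$, which sit at opposite ends of the $x$-axis with the connecting saddles $\xi_2,\xi_3$ between them, so there one keeps instead two of the narrow ellipses $4y_4^2+(x-c)^2-\tfrac12=0$ already used in Proposition~\ref{prop_4}. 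A close-up analogous to Figure~\ref{fig:zoom} shows that each blocking ellipse meets the $x$-nullcline semicircle(s) it encloses in two points, producing off-axis equilibria that are unstable, and all these features persist for $\varepsilon>0$ small.

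It remains to exclude connections between nodes other than the prescribed ones. Since every $\dot y_\ell$ carries the factor $y_\ell$, each coordinate subspace obtained by setting some of the $y_\ell$ to zero is flow-invariant; in particular $W^u(\xi_j)$ lies in the three-dimensional invariant subspace $V_j$ spanned by $x$, $y_{j+1}$ and $y_{j+2}$, in which $P_{0,j+1}$ and $P_{0,j+2}$ are again invariant. The two boundary curves of the two-dimensional surface $W^u(\xi_j)$ lie in these planes and, by the planar analysis, reach $\xi_{j+1}$ and $\xi_{j+2}$; the interior of $W^u(\xi_j)$ shadows the broken connection $\xi_j\to\xi_{j+1}\to\xi_{j+2}$ near one edge and $\xi_j\to\xi_{j+2}$ near the other, and by the inclination lemma limits onto the sink $\xi_{j+2}$ of the restricted flow on $V_j$. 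The other two nodes are never approached: $\xi_{j-1}$ is a saddle inside $V_j$ whose stable manifold lies in one of the two invariant planes, which $W^u(\xi_j)$ meets only along a curve going elsewhere, and the basin of the remaining sink $\xi_{j-2}$ inside $V_j$ is squeezed by the blocking ellipses into a thin layer along the axis over which the interior of $W^u(\xi_j)$ passes. Hence the only connections out of $\xi_j$ are $[\xi_j\to\xi_{j+1}]$ and $[\xi_j\to\xi_{j+2}]$, so the incoming connections to every node are exactly the two realised in its plane, and one obtains the required one-to-one correspondence with the edges of the DNN graph (which form a connected union of cycles, hence a DNN network). The step I expect to be the main obstacle is the quantitative bookkeeping for the wide ellipses: one has to verify that the coefficients $16$ and $3$ make the ellipse at once broad enough to trap both intended blocked nodes yet flat enough that the two saddle-node unstable manifolds sweep cleanly over it into the basin of $\xi_j$ without being captured by a blocked node's basin, that every off-axis equilibrium it creates is unstable, and that nothing is spoiled by the $\varepsilon$-perturbation; everything else is a routine transcription of subsections~\ref{subsec:n=3} and~\ref{subsec:4}. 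As there, the realisation is robust under perturbations preserving the coordinate planes, since every connection is of saddle--sink type within its plane.
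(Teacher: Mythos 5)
Your proposal is correct and follows essentially the same route as the paper: the plane-by-plane nullcline and Poincar\'e--Bendixson analysis carried over from Propositions~\ref{prop_3} and~\ref{prop_4}, with the single new ingredient being the wide ellipses $16y_j^2+(x-c)^2-3=0$ blocking the two adjacent unconnected nodes in $P_{01},P_{02},P_{03},P_{05}$, while $P_{04}$ keeps two narrow ellipses around $\xi_1$ and $\xi_5$ -- exactly the paper's argument. Your closing paragraph on excluding off-plane connections elaborates (via the invariant subspaces containing $W^u(\xi_j)$) what the paper only inherits from the proof of Proposition~\ref{prop_4}, so there is no substantive difference in approach.
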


\begin{proof}
Again, the proof runs along the same lines as that of Propositions~\ref{prop_3} and \ref{prop_4}. 
The nodes $\xi_k$ occur for $x=1,3,5,7,9$ and, along the $x$-axis, alternate with unstable equilibria for $x=2,4,6,8$.
What is new is that there are now several nodes to/from which connections must be inhibited. Following the arguments in the proof of Proposition~\ref{prop_4}, these connections can be prevented by placing ellipses around each of these nodes as in Figure~\ref{fig:zoom}. We do this in $P_{04}$ where the nodes to block are $\xi_1$ and $\xi_5$ and the blocking of connections is implemented exactly as before. In the other planes the same procedure is possible, but since the two nodes in question are directly next to each other, we use one wider red ellipse. E.g., in $P_{01}$ these are $\xi_2$ and $\xi_3$ and they are blocked off (and made locally stable) by the nullcline given through
$$16y_1^2+\left(x-\left(\xi_2+\frac{1}{2}\right)\right)^2-3=0.$$
This allows us to use a polynomial of lower degree by 2 than if we had used one ellipse for each node. Everything else works just as before, the nullclines are shown in Figure~\ref{fig:n=5}.
\end{proof}

\begin{figure}
\flushright
    \subfigure{
    \begin{tikzpicture}[xscale=1.2, yscale=1.2]
  \put (310,45) {$P_{01}$}
  \put (-80,40) {$\begin{cases}[\xi_4 \to \xi_1]\\ [\xi_5 \to \xi_1] \end{cases}$}
  \draw[thick, ->] (0, 0) -- (9.5, 0) node[right] {$x$};
  \draw[thick, ->] (0, 0) -- (0, 2) node[above] {$y_1$};
  \draw[domain=0:1, smooth, very thick, variable=\y, dashed, blue]  plot ({-\y*\y+1}, {\y});
  \draw[domain=2:3, smooth, very thick, variable=\y, dashed, blue]  plot ({\y},{sqrt(-(\y-(3-1/2))^2+1/4)});
  \draw[domain=4:5, smooth, very thick, variable=\y, dashed, blue]  plot ({\y},{sqrt(-(\y-(5-1/2))^2+1/4)});
  \draw[domain=6:7, smooth, very thick, variable=\y, dashed, blue]  plot ({\y},{sqrt(-(\y-(7-1/2))^2+1/4)});
  \draw[domain=8:9, smooth, very thick, variable=\y, dashed, blue]  plot ({\y},{sqrt(-(\y-(9-1/2))^2+1/4)});
  \draw[domain=3.5-sqrt(3):3.5+sqrt(3), smooth, very thick, variable=\y, dotted, red]  plot ({\y},{(sqrt(1/16*(-(\y-(3+1/2))^2+3)))});
  \draw[domain=0:2, smooth, very thick, variable=\y, dotted, red]  plot ({\y*\y+3/2}, {\y});
  \filldraw[black] (1,0) circle (2pt) node[below] {$\xi_1=1$};
  \filldraw[black] (3,0) circle (2pt) node[below] {$\xi_2=3$};
  \filldraw[black] (5,0) circle (2pt) node[below] {$\xi_3=5$};
  \filldraw[black] (7,0) circle (2pt) node[below] {$\xi_4=7$};
  \filldraw[black] (9,0) circle (2pt) node[below] {$\xi_5=9$};
  \end{tikzpicture}
    }
    \subfigure{
    \begin{tikzpicture}[xscale=1.2, yscale=1.2]
  \put (310,45) {$P_{02}$}
  \put (-80,40) {$\begin{cases}[\xi_1 \to \xi_2]\\ [\xi_5 \to \xi_2] \end{cases}$}
  \draw[thick, ->] (0, 0) -- (9.5, 0) node[right] {$x$};
  \draw[thick, ->] (0, 0) -- (0, 2) node[above] {$y_2$};
  \draw[domain=0:1.5, smooth, very thick, variable=\y, dashed, blue]  plot ({-\y*\y+3}, {\y});
  \draw[domain=1:2, smooth, very thick, variable=\y, dashed, blue]  plot ({\y},{sqrt(-(\y-(2-1/2))^2+1/4)});
  \draw[domain=4:5, smooth, very thick, variable=\y, dashed, blue]  plot ({\y},{sqrt(-(\y-(5-1/2))^2+1/4)});
  \draw[domain=6:7, smooth, very thick, variable=\y, dashed, blue]  plot ({\y},{sqrt(-(\y-(7-1/2))^2+1/4)});
  \draw[domain=8:9, smooth, very thick, variable=\y, dashed, blue]  plot ({\y},{sqrt(-(\y-(9-1/2))^2+1/4)});
  \draw[domain=5.5-sqrt(3):5.5+sqrt(3), smooth, very thick, variable=\y, dotted, red]  plot ({\y},{sqrt(1/16*(-(\y-(5+1/2))^2+3))});
  \draw[domain=0:1.8, smooth, very thick, variable=\y, dotted, red]  plot ({\y*\y+7/2}, {\y});
  \draw[domain=0:1.5, smooth, very thick, variable=\y, dotted, red]  plot ({-\y*\y+5/2}, {\y});
  \filldraw[black] (1,0) circle (2pt) node[below] {$\xi_1$};
  \filldraw[black] (3,0) circle (2pt) node[below] {$\xi_2$};
  \filldraw[black] (5,0) circle (2pt) node[below] {$\xi_3$};
  \filldraw[black] (7,0) circle (2pt) node[below] {$\xi_4$};
  \filldraw[black] (9,0) circle (2pt) node[below] {$\xi_5$};
  \end{tikzpicture}
    }
    \subfigure{
    \begin{tikzpicture}[xscale=1.2, yscale=1.2]
  \put (310,45) {$P_{03}$}
  \put (-80,40) {$\begin{cases}[\xi_1 \to \xi_3]\\ [\xi_2 \to \xi_3] \end{cases}$}
  \draw[thick, ->] (0, 0) -- (9.5, 0) node[right] {$x$};
  \draw[thick, ->] (0, 0) -- (0, 2) node[above] {$y_3$};
  \draw[domain=0:2, smooth, very thick, variable=\y, dashed, blue]  plot ({-\y*\y+5}, {\y});
  \draw[domain=1:2, smooth, very thick, variable=\y, dashed, blue]  plot ({\y},{sqrt(-(\y-(2-1/2))^2+1/4)});
  \draw[domain=3:4, smooth, very thick, variable=\y, dashed, blue]  plot ({\y},{sqrt(-(\y-(4-1/2))^2+1/4)});
  \draw[domain=6:7, smooth, very thick, variable=\y, dashed, blue]  plot ({\y},{sqrt(-(\y-(7-1/2))^2+1/4)});
  \draw[domain=8:9, smooth, very thick, variable=\y, dashed, blue]  plot ({\y},{sqrt(-(\y-(9-1/2))^2+1/4)});
  \draw[domain=7.5-sqrt(3):7.5+sqrt(3), smooth, very thick, variable=\y, dotted, red]  plot ({\y},{sqrt(1/16*(-(\y-(7+1/2))^2+3))});
  \draw[domain=0:2, smooth, very thick, variable=\y, dotted, red]  plot ({-\y*\y+11/2}, {\y});
  \draw[domain=0:2, smooth, very thick, variable=\y, dotted, red]  plot ({-\y*\y+9/2}, {\y});
  \filldraw[black] (1,0) circle (2pt) node[below] {$\xi_1$};
  \filldraw[black] (3,0) circle (2pt) node[below] {$\xi_2$};
  \filldraw[black] (5,0) circle (2pt) node[below] {$\xi_3$};
  \filldraw[black] (7,0) circle (2pt) node[below] {$\xi_4$};
  \filldraw[black] (9,0) circle (2pt) node[below] {$\xi_5$};
  \end{tikzpicture}
    }
    \subfigure{
    \begin{tikzpicture}[xscale=1.2, yscale=1.2]
  \put (310,45) {$P_{04}$}
  \put (-80,40) {$\begin{cases}[\xi_2 \to \xi_4]\\ [\xi_3 \to \xi_4] \end{cases}$}
  \draw[thick, ->] (0, 0) -- (9.5, 0) node[right] {$x$};
  \draw[thick, ->] (0, 0) -- (0, 2) node[above] {$y_4$};
  \draw[domain=0:2, smooth, very thick, variable=\y, dashed, blue]  plot ({-\y*\y+7}, {\y});
  \draw[domain=1:2, smooth, very thick, variable=\y, dashed, blue]  plot ({\y},{sqrt(-(\y-(2-1/2))^2+1/4)});
  \draw[domain=3:4, smooth, very thick, variable=\y, dashed, blue]  plot ({\y},{sqrt(-(\y-(4-1/2))^2+1/4)});
  \draw[domain=5:6, smooth, very thick, variable=\y, dashed, blue]  plot ({\y},{sqrt(-(\y-(6-1/2))^2+1/4)});
  \draw[domain=8:9, smooth, very thick, variable=\y, dashed, blue]  plot ({\y},{sqrt(-(\y-(9-1/2))^2+1/4)});
  \draw[domain=1.5-1/sqrt(2):1.5+1/sqrt(2), smooth, very thick, variable=\y, dotted, red]  plot ({\y},{sqrt(1/4*(-(\y-(2-1/2))^2+1/2))});
  \draw[domain=8.5-1/sqrt(2):8.5+1/sqrt(2), smooth, very thick, variable=\y, dotted, red]  plot ({\y},{sqrt(1/4*(-(\y-(9-1/2))^2+1/2))});
  \draw[domain=0:2, smooth, very thick, variable=\y, dotted, red]  plot ({-\y*\y+15/2}, {\y});
  \draw[domain=0:2, smooth, very thick, variable=\y, dotted, red]  plot ({-\y*\y+7-1/2}, {\y});
  \filldraw[black] (1,0) circle (2pt) node[below] {$\xi_1$};
  \filldraw[black] (3,0) circle (2pt) node[below] {$\xi_2$};
  \filldraw[black] (5,0) circle (2pt) node[below] {$\xi_3$};
  \filldraw[black] (7,0) circle (2pt) node[below] {$\xi_4$};
  \filldraw[black] (9,0) circle (2pt) node[below] {$\xi_5$};
  \end{tikzpicture}
    }
        \subfigure{
    \begin{tikzpicture}[xscale=1.2, yscale=1.2]
  \put (310,45) {$P_{05}$}
  \put (-80,40) {$\begin{cases}[\xi_3 \to \xi_5]\\ [\xi_4 \to \xi_5] \end{cases}$}
  \draw[thick, ->] (0, 0) -- (9.5, 0) node[right] {$x$};
  \draw[thick, ->] (0, 0) -- (0, 2) node[above] {$y_5$};
  \draw[domain=0:2, smooth, very thick, variable=\y, dashed, blue]  plot ({-\y*\y+9}, {\y});
  \draw[domain=1:2, smooth, very thick, variable=\y, dashed, blue]  plot ({\y},{sqrt(-(\y-(2-1/2))^2+1/4)});
  \draw[domain=3:4, smooth, very thick, variable=\y, dashed, blue]  plot ({\y},{sqrt(-(\y-(4-1/2))^2+1/4)});
  \draw[domain=5:6, smooth, very thick, variable=\y, dashed, blue]  plot ({\y},{sqrt(-(\y-(6-1/2))^2+1/4)});
  \draw[domain=7:8, smooth, very thick, variable=\y, dashed, blue]  plot ({\y},{sqrt(-(\y-(8-1/2))^2+1/4)});
  \draw[domain=2.5-sqrt(3):2.5+sqrt(3), smooth, very thick, variable=\y, dotted, red]  plot ({\y},{sqrt(1/16*(-(\y-(2+1/2))^2+3))});
  \draw[domain=0:2, smooth, very thick, variable=\y, dotted, red]  plot ({-\y*\y+9-1/2}, {\y});
  \filldraw[black] (1,0) circle (2pt) node[below] {$\xi_1$};
  \filldraw[black] (3,0) circle (2pt) node[below] {$\xi_2$};
  \filldraw[black] (5,0) circle (2pt) node[below] {$\xi_3$};
  \filldraw[black] (7,0) circle (2pt) node[below] {$\xi_4$};
  \filldraw[black] (9,0) circle (2pt) node[below] {$\xi_5$};
  \end{tikzpicture}
    }
\caption{Nullclines off the axes for $x$ (blue/dashed) and $y_j$ (red/dotted) in the planes $P_{0j}$ for $n=5$. In all but $P_{04}$ a wider red ellipse prevents unwanted connections from several nodes at once. The direction of flow across the nullclines can be deduced from the arrows in Figures~\ref{fig:n=3} and \ref{fig:zoom}.}
\label{fig:n=5}
\end{figure}
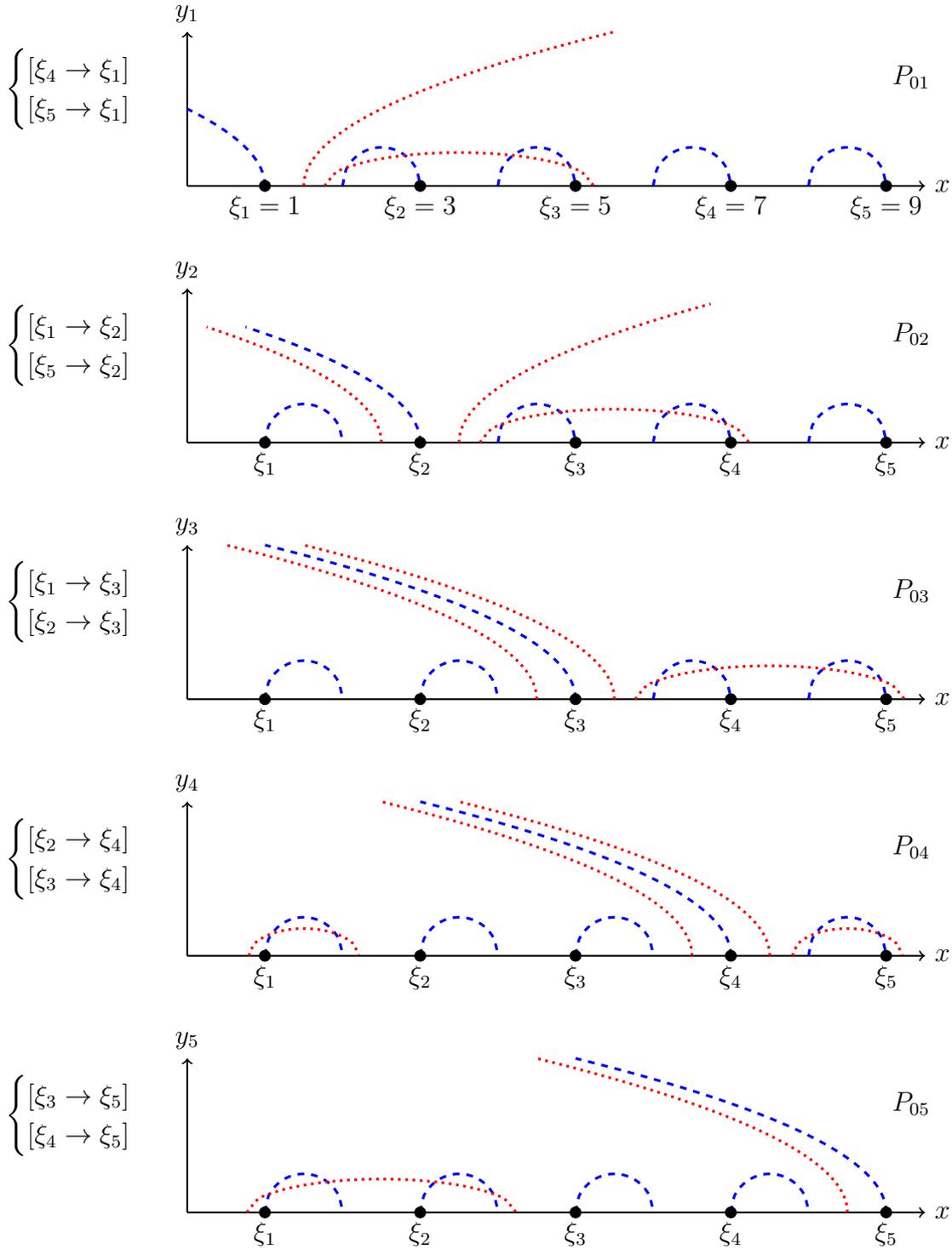

\subsection{A DNN network with six nodes}
\label{subsec:n=6}

For $n=6$ the new and very interesting feature appears that no additional space dimension is needed, i.e.\ we remain in $\R^6_{y \geq 0}$. We achieve this by placing the incoming connections to $\xi_3$ and $\xi_6$ all in the same plane $P_{03}$. Moreover, even wider ellipses are implemented as $y_j$-nullclines in $P_{01}$ and $P_{02}$ to prevent unwanted connections and keep the degree of the vector field down.
\begin{proposition}
\label{prop_6}
For $\varepsilon>0$ sufficiently small, system~(\ref{sys6}) realises the DNN graph with $n=6$ as a heteroclinic network in $\R^6_{y \geq 0}$ between the nodes $\xi_k:=(2k-1,0,0,0,0,0)$ with $k=1,\ldots,6$.
\begin{align}
\label{sys6}
\begin{cases}
\dot{x} &=  -\varepsilon \prod\limits_{k=1}^{11} (x-k)\\
&+ y_1\Big(-y_1^2-x+\xi_1\Big)\Big(y_1^2+(x-(\xi_2-\frac{1}{2}))^2-\frac{1}{4}\Big)\Big(y_1^2+(x-(\xi_3-\frac{1}{2}))^2-\frac{1}{4}\Big)\\
& \quad \times \Big(y_1^2+(x-(\xi_4-\frac{1}{2}))^2-\frac{1}{4}\Big)\Big(y_1^2+(x-(\xi_5-\frac{1}{2}))^2-\frac{1}{4}\Big)\Big(y_1^2+(x-(\xi_6-\frac{1}{2}))^2-\frac{1}{4}\Big)\\
&+ y_2\Big(-y_2^2-x+\xi_2\Big)\Big(y_2^2+(x-(\xi_1+\frac{1}{2}))^2-\frac{1}{4}\Big)\Big(y_2^2+(x-(\xi_3-\frac{1}{2}))^2-\frac{1}{4}\Big)\\
& \quad \times \Big(y_2^2+(x-(\xi_4-\frac{1}{2}))^2-\frac{1}{4}\Big)\Big(y_2^2+(x-(\xi_5-\frac{1}{2}))^2-\frac{1}{4}\Big)\Big(y_2^2+(x-(\xi_6-\frac{1}{2}))^2-\frac{1}{4}\Big)\\
&+ y_3\Big(-y_3^2-x+\xi_3\Big)\Big(y_3^2+(x-(\xi_1+\frac{1}{2}))^2-\frac{1}{4}\Big)\Big(y_3^2+(x-(\xi_2+\frac{1}{2}))^2-\frac{1}{4}\Big)\\
&\quad \times \Big(-y_3^2-x+\xi_6\Big)\Big(y_3^2+(x-(\xi_4+\frac{1}{2}))^2-\frac{1}{4}\Big)\Big(y_3^2+(x-(\xi_5+\frac{1}{2}))^2-\frac{1}{4}\Big)\\
&\quad \times \Big(-y_3^2-x+(\xi_3+1)\Big)\\
&+ y_4\Big(-y_4^2-x+\xi_4\Big)\Big(y_4^2+(x-(\xi_1+\frac{1}{2}))^2-\frac{1}{4}\Big)\Big(y_4^2+(x-(\xi_2+\frac{1}{2}))^2-\frac{1}{4}\Big)\\
& \quad \times \Big(y_4^2+(x-(\xi_3+\frac{1}{2}))^2-\frac{1}{4}\Big)\Big(y_4^2+(x-(\xi_5-\frac{1}{2}))^2-\frac{1}{4}\Big)\Big(y_4^2+(x-(\xi_6-\frac{1}{2}))^2-\frac{1}{4}\Big)\\
&+ y_5\Big(-y_5^2-x+\xi_5\Big)\Big(y_5^2+(x-(\xi_1+\frac{1}{2}))^2-\frac{1}{4}\Big)\Big(y_5^2+(x-(\xi_2+\frac{1}{2}))^2-\frac{1}{4}\Big)\\
& \quad \times \Big(y_5^2+(x-(\xi_3+\frac{1}{2}))^2-\frac{1}{4}\Big)\Big(y_5^2+(x-(\xi_4+\frac{1}{2}))^2-\frac{1}{4}\Big)\Big(y_5^2+(x-(\xi_6-\frac{1}{2}))^2-\frac{1}{4}\Big)\\
\dot{y}_1 &= -y_1\Big(y_1^2-x+(\xi_1+\frac{1}{2})\Big)\Big(64y_1^2+(x-(\xi_3-\frac{1}{2}))^2-7\Big)\\
\dot{y}_2 &= y_2\Big(-y_2^2-x+(\xi_2-\frac{1}{2})\Big)\Big(y_2^2-x+(\xi_2+\frac{1}{2})\Big)\Big(64y_2^2+(x-(\xi_4-\frac{1}{2}))^2-7\Big)\\ 
\dot{y}_3 &= y_3\Big(-y_3^2-x+(\xi_3-\frac{1}{2})\Big)\Big(-y_3^2-x+(\xi_3+\frac{1}{2})\Big)\Big(-y_3^2-x+(\xi_6-\frac{1}{2})\Big)\\
\dot{y}_4 &= y_4\Big(-y_4^2-x+(\xi_4-\frac{1}{2})\Big)\Big(-y_4^2-x+(\xi_4+\frac{1}{2})\Big)\\
& \quad \times \Big(4y_4^2+(x-(\xi_1+\frac{1}{2}))^2-\frac{1}{2}\Big)\Big(16y_4^2+(x-(\xi_5+\frac{1}{2}))^2-3\Big)\\
\dot{y}_5 &= y_5\Big(-y_5^2-x+(\xi_5-\frac{1}{2})\Big)\Big(-y_5^2-x+(\xi_5+\frac{1}{2})\Big)\\
& \quad \times \Big(4y_5^2+(x-(\xi_6-\frac{1}{2}))^2-\frac{1}{2}\Big)\Big(16y_5^2+(x-(\xi_2-\frac{1}{2}))^2-3\Big)
\end{cases}
\end{align}

\end{proposition}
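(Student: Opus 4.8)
The plan is to run the argument template of Propositions~\ref{prop_3}, \ref{prop_4} and \ref{prop_5}, isolating the one genuinely new point: the plane $P_{03}$ must now carry the incoming connections of \emph{two} nodes, $\xi_3$ and $\xi_6$. First I would fix the skeleton exactly as before. The $x$-axis is invariant and, thanks to the factor $-\varepsilon\prod_{k=1}^{11}(x-k)$, for $\varepsilon>0$ small its only equilibria sit at $x\in\{1,\dots,11\}$, the odd ones being the nodes $\xi_k=(2k-1,0,\dots,0)$ and the even ones auxiliary equilibria; as in Proposition~\ref{prop_3} the $\varepsilon$-term also pushes the $\{y=0\}$ nullcline below the axis, out of $\R^6_{y\ge0}$. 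Each plane $P_{0j}$, $j=1,\dots,5$, is flow-invariant by construction. From the Jacobian of \eqref{sys6} one checks, as before, that every node is stable along the $x$-axis and alternates with unstable auxiliary equilibria, and that in $P_{0j}$ the sign of $\dot y_j$ makes precisely the target node(s) of that plane attracting in the $y_j$-direction while the nodes feeding into it are expanding; in particular $\dot y_3>0$ near $\xi_1,\xi_2,\xi_4,\xi_5$ and $\dot y_3<0$ near $\xi_3,\xi_6$, so $\xi_3$ and $\xi_6$ are the only sinks of $P_{03}$ while the remaining four nodes are saddles there.

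In $P_{01}$ and $P_{02}$ the analysis is that of Proposition~\ref{prop_5} with an even wider blocking ellipse: in $P_{01}$ the ellipse $64y_1^2+(x-(\xi_3-\frac12))^2-7=0$ meets the $x$-axis at $\xi_3-\frac12\pm\sqrt7$ and hence encloses the three consecutive nodes $\xi_2,\xi_3,\xi_4$, making them sinks in the plane and creating, where it crosses the $x$-nullcline circles, only unstable off-axis equilibria (as in Figure~\ref{fig:zoom}); the prescribed connections $[\xi_5\to\xi_1]$ and $[\xi_6\to\xi_1]$ are then forced by the Poincar\'e--Bendixson argument with the left-opening parabola at $\xi_1$ as capture curve, and $P_{02}$ is identical with $\xi_3,\xi_4,\xi_5$ blocked. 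In $P_{04}$ and $P_{05}$ one combines the two blocking techniques of Propositions~\ref{prop_4} and \ref{prop_5}: a narrow ellipse around the single isolated node that must be blocked ($\xi_1$ in $P_{04}$, $\xi_6$ in $P_{05}$) together with a wider ellipse around the adjacent pair ($\xi_5,\xi_6$ in $P_{04}$, $\xi_1,\xi_2$ in $P_{05}$), which yields $[\xi_2\to\xi_4],[\xi_3\to\xi_4]$ and $[\xi_3\to\xi_5],[\xi_4\to\xi_5]$.

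The heart of the argument is $P_{03}$. Its off-axis $x$-nullclines are the two left-opening capture parabolas through $\xi_3$ and through $\xi_6$, the four small circles through $\{\xi_1,\xi_1{+}1\}$, $\{\xi_2,\xi_2{+}1\}$, $\{\xi_4,\xi_4{+}1\}$, $\{\xi_5,\xi_5{+}1\}$, and the extra left-opening parabola through $\xi_3+1$; its off-axis $y_3$-nullclines are the three left-opening parabolas through $\xi_3-\frac12$, $\xi_3+\frac12$, $\xi_6-\frac12$. Since all of these parabolas are translates of one another and the small circles have height $\le\frac12$, one verifies that no off-axis $x$-nullcline meets an off-axis $y_3$-nullcline, so $P_{03}$ contains no off-axis equilibria at all. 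The decisive point is that the extra parabola through $\xi_3+1$ (which lies strictly between $\xi_3$ and $\xi_4$) is an $x$-nullcline across which $\dot x$ changes from negative on its left to positive on its right, so it acts as a two-sided barrier and no trajectory can cross between $\{x<\xi_3+1-y_3^2\}$ and its complement. Hence the dynamics in $P_{03}$ splits into two disjoint copies of the planar picture already handled for $P_{03}$ in Propositions~\ref{prop_3}--\ref{prop_5} --- one around $\xi_3$ with sources $\xi_1,\xi_2$ to its left, one around $\xi_6$ with sources $\xi_4,\xi_5$ to its left --- and Poincar\'e--Bendixson delivers the four connections $[\xi_1\to\xi_3],[\xi_2\to\xi_3],[\xi_4\to\xi_6],[\xi_5\to\xi_6]$; no spurious connection such as $[\xi_2\to\xi_6]$ or $[\xi_4\to\xi_3]$ can occur because trajectories cannot cross the barrier, and nothing winds onto $\xi_1,\xi_2,\xi_4,\xi_5$ since these are saddles in $P_{03}$.

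Finally I would rule out connections leaving the coordinate planes exactly as in the proof of Proposition~\ref{prop_4}: the unstable manifold of a node $\xi_k$ is two-dimensional and contained in the three-dimensional invariant subspace spanned by the two planes carrying $\xi_k$'s outgoing connections, while inside any such subspace the stable manifold of a target node $\xi_\ell$ lies in the single invariant plane $P_{0\ell}$ (or $P_{03}$ when $\ell\in\{3,6\}$), so no incoming connection can arrive from outside that plane. As always, for $\varepsilon>0$ small the $\varepsilon$-term perturbs the conics only slightly and preserves all transversality and Poincar\'e--Bendixson arguments. I expect the main obstacle to be the region-by-region sign bookkeeping in $P_{03}$, where seven off-axis $x$-nullclines and three $y_3$-nullclines interact: one must check that the barrier at $x=\xi_3+1$ genuinely separates the two halves along its entire length and that none of the many new nullcline intersections hides an unexpected off-axis equilibrium or an unexpected connection.
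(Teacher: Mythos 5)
Your proposal is correct and follows essentially the same route as the paper's proof: the same nullcline analysis in each plane $P_{0j}$, the wide blocking ellipses in $P_{01}$, $P_{02}$ and the mixed narrow/wide ellipses in $P_{04}$, $P_{05}$, and crucially the extra left-opening parabola through $\xi_3+1$ in $P_{03}$ acting as a separating barrier (with no off-axis equilibria there since all parabolas are parallel), followed by the Poincar\'e--Bendixson and invariant-subspace arguments. The only difference is that you spell out some sign and separation details that the paper delegates to the figures and to the earlier propositions.
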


\begin{proof}
All nullclines are displayed in Figure~\ref{fig:n=6}. 
As intended the nodes $\xi_k$ occur for $x=1,3,5,7,9,11$ and, along the $x$-axis, alternate with unstable equilibria $x=2,4,6,8,10$. It is noteworthy that in $P_{03}$ the equilibrium for $x=6$ is created by the intersection of a blue parabola with the $x$-axis, unlike the other equilibria on the $x$-axis which arise from a circle.
The main point to emphasize is in $P_{03}$, where connections $[\xi_1 \to \xi_3]$, $[\xi_2 \to \xi_3]$ and $[\xi_4 \to \xi_6]$, $[\xi_5 \to \xi_6]$ are generated simultaneously, but otherwise in the exact same way as before. Note that there is an additional $x$-nullcline (blue parabola at $\xi_3+1$, opening left) that separates both sets of connections from one another and ensures arrows are pointing in the correct directions in the respective regions. Since all parabolas (blue and red) have the same slope, none of them intersect and there are no new equilibria off the coordinate axes.

In $P_{01}$ there are now three nodes, $\xi_2, \xi_3, \xi_4$, that must not connect to $\xi_1$. A wider ellipse corresponding to the third factor in the equation for $\dot{y}_1$ achieves this while keeping the degree of the vector field lower by 4 than if three smaller ellipses had been used. Analogously in $P_{02}$ and for the nodes $\xi_3, \xi_4, \xi_5$.
\end{proof}

Moving on to $n=7$ (which we do not treat explicitly anymore) what needs to happen is that all incoming connections to $\xi_4$ and $\xi_7$ must lie in $P_{04}$ in order to avoid introducing a new space dimension.

Also in $P_{01}$ and $P_{02}$ a pattern emerges: as $n$ increases, wider and wider ellipses can be used to inhibit all unwanted connections at the same time. In the next section we compute suitable equations for the general case.

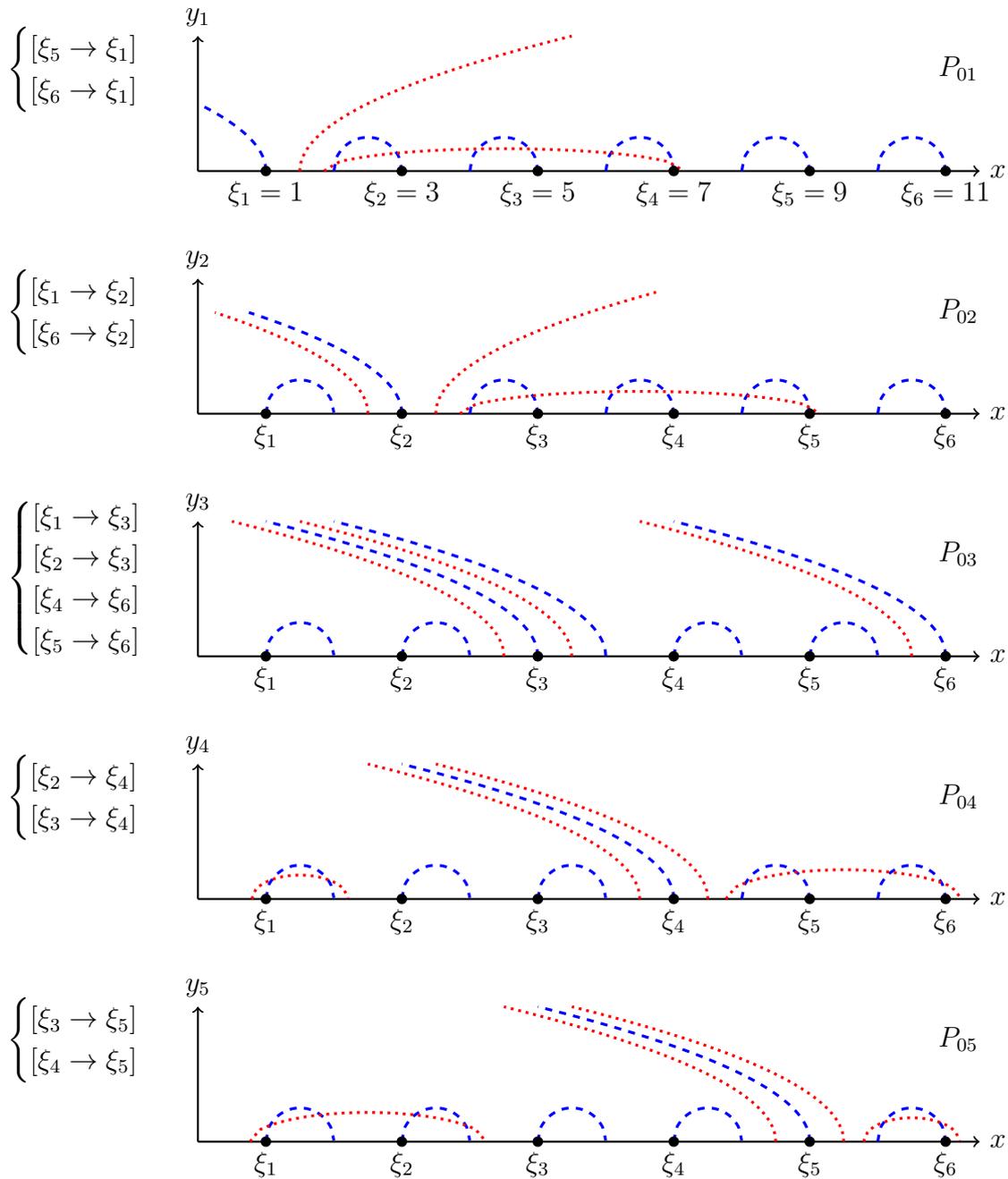
\begin{figure}
\flushright
    \subfigure{
    \begin{tikzpicture}
  \put (310,40) {$P_{01}$}
  \put (-80,40) {$\begin{cases}[\xi_5 \to \xi_1]\\ [\xi_6 \to \xi_1] \end{cases}$}
  \draw[thick, ->] (0, 0) -- (11.5, 0) node[right] {$x$};
  \draw[thick, ->] (0, 0) -- (0, 2) node[above] {$y_1$};
  \draw[domain=0:1, smooth, very thick, variable=\y, dashed, blue]  plot ({-\y*\y+1}, {\y});
  \draw[domain=2:3, smooth, very thick, variable=\y, dashed, blue]  plot ({\y},{sqrt(-(\y-(3-1/2))^2+1/4)});
  \draw[domain=4:5, smooth, very thick, variable=\y, dashed, blue]  plot ({\y},{sqrt(-(\y-(5-1/2))^2+1/4)});
  \draw[domain=6:7, smooth, very thick, variable=\y, dashed, blue]  plot ({\y},{sqrt(-(\y-(7-1/2))^2+1/4)});
  \draw[domain=8:9, smooth, very thick, variable=\y, dashed, blue]  plot ({\y},{sqrt(-(\y-(9-1/2))^2+1/4)});
  \draw[domain=10:11, smooth, very thick, variable=\y, dashed, blue]  plot ({\y},{sqrt(-(\y-(11-1/2))^2+1/4)});
  \draw[domain=4.5-sqrt(7):4.5+sqrt(7), smooth, very thick, variable=\y, dotted, red]  plot ({\y},{(sqrt(1/64*(-(\y-(5-1/2))^2+7)))});
  \draw[domain=0:2, smooth, very thick, variable=\y, dotted, red]  plot ({\y*\y+3/2}, {\y});
  \filldraw[black] (1,0) circle (2pt) node[below] {$\xi_1=1$};
  \filldraw[black] (3,0) circle (2pt) node[below] {$\xi_2=3$};
  \filldraw[black] (5,0) circle (2pt) node[below] {$\xi_3=5$};
  \filldraw[black] (7,0) circle (2pt) node[below] {$\xi_4=7$};
  \filldraw[black] (9,0) circle (2pt) node[below] {$\xi_5=9$};
  \filldraw[black] (11,0) circle (2pt) node[below] {$\xi_6=11$};
  \end{tikzpicture}
    }
    \subfigure{
    \begin{tikzpicture}
  \put (310,40) {$P_{02}$}
  \put (-80,40) {$\begin{cases}[\xi_1 \to \xi_2]\\ [\xi_6 \to \xi_2] \end{cases}$}
  \draw[thick, ->] (0, 0) -- (11.5, 0) node[right] {$x$};
  \draw[thick, ->] (0, 0) -- (0, 2) node[above] {$y_2$};
  \draw[domain=0:1.5, smooth, very thick, variable=\y, dashed, blue]  plot ({-\y*\y+3}, {\y});
  \draw[domain=1:2, smooth, very thick, variable=\y, dashed, blue]  plot ({\y},{sqrt(-(\y-(2-1/2))^2+1/4)});
  \draw[domain=4:5, smooth, very thick, variable=\y, dashed, blue]  plot ({\y},{sqrt(-(\y-(5-1/2))^2+1/4)});
  \draw[domain=6:7, smooth, very thick, variable=\y, dashed, blue]  plot ({\y},{sqrt(-(\y-(7-1/2))^2+1/4)});
  \draw[domain=8:9, smooth, very thick, variable=\y, dashed, blue]  plot ({\y},{sqrt(-(\y-(9-1/2))^2+1/4)});
  \draw[domain=10:11, smooth, very thick, variable=\y, dashed, blue]  plot ({\y},{sqrt(-(\y-(11-1/2))^2+1/4)});
  \draw[domain=6.5-sqrt(7):6.5+sqrt(7), smooth, very thick, variable=\y, dotted, red]  plot ({\y},{sqrt(1/64*(-(\y-(7-1/2))^2+7))});
  \draw[domain=0:1.8, smooth, very thick, variable=\y, dotted, red]  plot ({\y*\y+7/2}, {\y});
  \draw[domain=0:1.5, smooth, very thick, variable=\y, dotted, red]  plot ({-\y*\y+5/2}, {\y});
  \filldraw[black] (1,0) circle (2pt) node[below] {$\xi_1$};
  \filldraw[black] (3,0) circle (2pt) node[below] {$\xi_2$};
  \filldraw[black] (5,0) circle (2pt) node[below] {$\xi_3$};
  \filldraw[black] (7,0) circle (2pt) node[below] {$\xi_4$};
  \filldraw[black] (9,0) circle (2pt) node[below] {$\xi_5$};
  \filldraw[black] (11,0) circle (2pt) node[below] {$\xi_6$};
  \end{tikzpicture}
    }
    \subfigure{
    \begin{tikzpicture}
  \put (310,40) {$P_{03}$}
  \put (-80,30) {$\begin{cases}[\xi_1 \to \xi_3]\\ [\xi_2 \to \xi_3]\\ [\xi_4 \to \xi_6]\\ [\xi_5 \to \xi_6] \end{cases}$}
  \draw[thick, ->] (0, 0) -- (11.5, 0) node[right] {$x$};
  \draw[thick, ->] (0, 0) -- (0, 2) node[above] {$y_3$};
  \draw[domain=0:2, smooth, very thick, variable=\y, dashed, blue]  plot ({-\y*\y+5}, {\y});
  \draw[domain=0:2, smooth, very thick, variable=\y, dashed, blue]  plot ({-\y*\y+6}, {\y});
  \draw[domain=0:2, smooth, very thick, variable=\y, dashed, blue]  plot ({-\y*\y+11}, {\y});
  \draw[domain=1:2, smooth, very thick, variable=\y, dashed, blue]  plot ({\y},{sqrt(-(\y-(2-1/2))^2+1/4)});
  \draw[domain=3:4, smooth, very thick, variable=\y, dashed, blue]  plot ({\y},{sqrt(-(\y-(4-1/2))^2+1/4)});
  \draw[domain=7:8, smooth, very thick, variable=\y, dashed, blue]  plot ({\y},{sqrt(-(\y-(8-1/2))^2+1/4)});
  \draw[domain=9:10, smooth, very thick, variable=\y, dashed, blue]  plot ({\y},{sqrt(-(\y-(10-1/2))^2+1/4)});
  \draw[domain=0:2, smooth, very thick, variable=\y, dotted, red]  plot ({-\y*\y+9/2}, {\y});
  \draw[domain=0:2, smooth, very thick, variable=\y, dotted, red]  plot ({-\y*\y+11/2}, {\y});
  \draw[domain=0:2, smooth, very thick, variable=\y, dotted, red]  plot ({-\y*\y+21/2}, {\y});
  \filldraw[black] (1,0) circle (2pt) node[below] {$\xi_1$};
  \filldraw[black] (3,0) circle (2pt) node[below] {$\xi_2$};
  \filldraw[black] (5,0) circle (2pt) node[below] {$\xi_3$};
  \filldraw[black] (7,0) circle (2pt) node[below] {$\xi_4$};
  \filldraw[black] (9,0) circle (2pt) node[below] {$\xi_5$};
  \filldraw[black] (11,0) circle (2pt) node[below] {$\xi_6$};
  \end{tikzpicture}
    }
    \subfigure{
    \begin{tikzpicture}
  \put (310,40) {$P_{04}$}
  \put (-80,40) {$\begin{cases}[\xi_2 \to \xi_4]\\ [\xi_3 \to \xi_4] \end{cases}$}
  \draw[thick, ->] (0, 0) -- (11.5, 0) node[right] {$x$};
  \draw[thick, ->] (0, 0) -- (0, 2) node[above] {$y_4$};
  \draw[domain=0:2, smooth, very thick, variable=\y, dashed, blue]  plot ({-\y*\y+7}, {\y});
  \draw[domain=1:2, smooth, very thick, variable=\y, dashed, blue]  plot ({\y},{sqrt(-(\y-(2-1/2))^2+1/4)});
  \draw[domain=3:4, smooth, very thick, variable=\y, dashed, blue]  plot ({\y},{sqrt(-(\y-(4-1/2))^2+1/4)});
  \draw[domain=5:6, smooth, very thick, variable=\y, dashed, blue]  plot ({\y},{sqrt(-(\y-(6-1/2))^2+1/4)});
  \draw[domain=8:9, smooth, very thick, variable=\y, dashed, blue]  plot ({\y},{sqrt(-(\y-(9-1/2))^2+1/4)});
  \draw[domain=10:11, smooth, very thick, variable=\y, dashed, blue]  plot ({\y},{sqrt(-(\y-(11-1/2))^2+1/4)});
  \draw[domain=1.5-1/sqrt(2):1.5+1/sqrt(2), smooth, very thick, variable=\y, dotted, red]  plot ({\y},{sqrt(1/4*(-(\y-(2-1/2))^2+1/2))});
  \draw[domain=9.5-sqrt(3):9.5+sqrt(3), smooth, very thick, variable=\y, dotted, red]  plot ({\y},{sqrt(1/16*(-(\y-(9+1/2))^2+3))});
  \draw[domain=0:2, smooth, very thick, variable=\y, dotted, red]  plot ({-\y*\y+15/2}, {\y});
  \draw[domain=0:2, smooth, very thick, variable=\y, dotted, red]  plot ({-\y*\y+7-1/2}, {\y});
  \filldraw[black] (1,0) circle (2pt) node[below] {$\xi_1$};
  \filldraw[black] (3,0) circle (2pt) node[below] {$\xi_2$};
  \filldraw[black] (5,0) circle (2pt) node[below] {$\xi_3$};
  \filldraw[black] (7,0) circle (2pt) node[below] {$\xi_4$};
  \filldraw[black] (9,0) circle (2pt) node[below] {$\xi_5$};
  \filldraw[black] (11,0) circle (2pt) node[below] {$\xi_6$};
  \end{tikzpicture}
    }
        \subfigure{
    \begin{tikzpicture}
  \put (310,40) {$P_{05}$}
  \put (-80,40) {$\begin{cases}[\xi_3 \to \xi_5]\\ [\xi_4 \to \xi_5] \end{cases}$}
  \draw[thick, ->] (0, 0) -- (11.5, 0) node[right] {$x$};
  \draw[thick, ->] (0, 0) -- (0, 2) node[above] {$y_5$};
  \draw[domain=0:2, smooth, very thick, variable=\y, dashed, blue]  plot ({-\y*\y+9}, {\y});
  \draw[domain=1:2, smooth, very thick, variable=\y, dashed, blue]  plot ({\y},{sqrt(-(\y-(2-1/2))^2+1/4)});
  \draw[domain=3:4, smooth, very thick, variable=\y, dashed, blue]  plot ({\y},{sqrt(-(\y-(4-1/2))^2+1/4)});
  \draw[domain=5:6, smooth, very thick, variable=\y, dashed, blue]  plot ({\y},{sqrt(-(\y-(6-1/2))^2+1/4)});
  \draw[domain=7:8, smooth, very thick, variable=\y, dashed, blue]  plot ({\y},{sqrt(-(\y-(8-1/2))^2+1/4)});
  \draw[domain=10:11, smooth, very thick, variable=\y, dashed, blue]  plot ({\y},{sqrt(-(\y-(11-1/2))^2+1/4)});
  \draw[domain=2.5-sqrt(3):2.5+sqrt(3), smooth, very thick, variable=\y, dotted, red]  plot ({\y},{sqrt(1/16*(-(\y-(2+1/2))^2+3))});
    \draw[domain=10.5-1/sqrt(2):10.5+1/sqrt(2), smooth, very thick, variable=\y, dotted, red]  plot ({\y},{sqrt(1/4*(-(\y-(11-1/2))^2+1/2))});
  \draw[domain=0:2, smooth, very thick, variable=\y, dotted, red]  plot ({-\y*\y+19/2}, {\y});
  \draw[domain=0:2, smooth, very thick, variable=\y, dotted, red]  plot ({-\y*\y+9-1/2}, {\y});
  \filldraw[black] (1,0) circle (2pt) node[below] {$\xi_1$};
  \filldraw[black] (3,0) circle (2pt) node[below] {$\xi_2$};
  \filldraw[black] (5,0) circle (2pt) node[below] {$\xi_3$};
  \filldraw[black] (7,0) circle (2pt) node[below] {$\xi_4$};
  \filldraw[black] (9,0) circle (2pt) node[below] {$\xi_5$};
  \filldraw[black] (11,0) circle (2pt) node[below] {$\xi_6$};
  \end{tikzpicture}
    }
\caption{Nullclines off the axes for $x$ (blue/dashed) and $y_j$ (red/dotted) in the planes $P_{0j}$ for $n=6$. In $P_{03}$ a new blue parabola separates the regions for incoming connections at two different nodes. The last new feature appears in $P_{03}$ where a blue parabola intersects the $x$-axis at $x=6$ instead of at a node. This parabola separates the incoming connections to $\xi_3$ from the incoming connections to $\xi_6$. Notice that in $P_{01}$ and $P_{02}$ there are ellipses over three nodes, blocking connections. Again, the direction of flow across the nullclines can be deduced from the arrows in Figures~\ref{fig:n=3} and \ref{fig:zoom}.}
\label{fig:n=6}
\end{figure}

\section{DNN networks for arbitrary $n \in \N$}
\label{sec_n}

In the previous section we have covered the cases $n=3,4,5,6$ explicitly. What we learned there can be generalized to realise any DNN graph as a heteroclinic network in $\R^6_{y \geq 0}$, though the notation is at times cumbersome.

The main idea can be summarized as follows:
\begin{itemize}
	\item The planes $P_{01}$ and $P_{02}$ are used for the incoming connections to $\xi_1$ and $\xi_2$, respectively. The nullcline structure is simple, we create wider and wider red ellipses ($y_j$-nullclines for $j=1,2$) to ensure local stability of all nodes and prevent unwanted connections. Moreover, these are the only planes with connections from right to left (two in $P_{01}$ and one in $P_{02}$).
	\item As $n$ increases more and more sets of incoming connections are built into $P_{03}$, $P_{04}$ and $P_{05}$. Thus, in these planes no more than three nodes remain unconnected and have to be blocked by red ellipses. More precisely: the incoming connections to $\xi_j, \xi_{j+3},\ldots,\xi_{j+3l}$ (with $j+3l \leq n$ and $j=3,4,5$) are all in $P_{0j}$, they all go from left to right. As an example, the nullclines in $P_{04}$ for $n=7$ are schematically depicted in Figure~\ref{fig:n=7}. Note the similarity to $P_{03}$ for $n=6$ in Figure~\ref{fig:n=6}.
\end{itemize}

\begin{figure}
\centering
    \begin{tikzpicture}
  \put (355,45) {$P_{04}$}
  \draw[thick, ->] (0, 0) -- (13.5, 0) node[right] {$x$};
  \draw[thick, ->] (0, 0) -- (0, 2) node[above] {$y_4$};
  \draw[domain=0:2, smooth, very thick, variable=\y, dashed, blue]  plot ({-\y*\y+7}, {\y});
  \draw[domain=1:2, smooth, very thick, variable=\y, dashed, blue]  plot ({\y},{sqrt(-(\y-(2-1/2))^2+1/4)});
  \draw[domain=3:4, smooth, very thick, variable=\y, dashed, blue]  plot ({\y},{sqrt(-(\y-(4-1/2))^2+1/4)});
  \draw[domain=5:6, smooth, very thick, variable=\y, dashed, blue]  plot ({\y},{sqrt(-(\y-(6-1/2))^2+1/4)});
  \draw[domain=9:10, smooth, very thick, variable=\y, dashed, blue]  plot ({\y},{sqrt(-(\y-(9+1/2))^2+1/4)});
  \draw[domain=11:12, smooth, very thick, variable=\y, dashed, blue]  plot ({\y},{sqrt(-(\y-(11+1/2))^2+1/4)});
  \draw[domain=1.5-1/sqrt(2):1.5+1/sqrt(2), smooth, very thick, variable=\y, dotted, red]  plot ({\y},{sqrt(1/4*(-(\y-(2-1/2))^2+1/2))});
  \draw[domain=0:2, smooth, very thick, variable=\y, dashed, blue]  plot ({-\y*\y+13}, {\y});
  \draw[domain=0:2, smooth, very thick, variable=\y, dashed, blue]  plot ({-\y*\y+8}, {\y});
  \draw[domain=1.5-1/sqrt(2):1.5+1/sqrt(2), smooth, very thick, variable=\y, dotted, red]  plot ({\y},{sqrt(1/4*(-(\y-(2-1/2))^2+1/2))});
  \draw[domain=0:2, smooth, very thick, variable=\y, dotted, red]  plot ({-\y*\y+15/2}, {\y});
  \draw[domain=0:2, smooth, very thick, variable=\y, dotted, red]  plot ({-\y*\y+7-1/2}, {\y});
  \draw[domain=0:2, smooth, very thick, variable=\y, dotted, red]  plot ({-\y*\y+13-1/2}, {\y});
  \filldraw[black] (1,0) circle (2pt) node[below] {$\xi_1$};
  \filldraw[black] (3,0) circle (2pt) node[below] {$\xi_2$};
  \filldraw[black] (5,0) circle (2pt) node[below] {$\xi_3$};
  \filldraw[black] (7,0) circle (2pt) node[below] {$\xi_4$};
  \filldraw[black] (9,0) circle (2pt) node[below] {$\xi_5$};
  \filldraw[black] (11,0) circle (2pt) node[below] {$\xi_6$};
  \filldraw[black] (13,0) circle (2pt) node[below] {$\xi_7$};
  \end{tikzpicture}
\caption{Nullclines off the axes for $x$ (blue/dashed) and $y_4$ (red/dotted) in the plane $P_{04}$ for $n=7$. There are connections $[\xi_2 \to \xi_4]$, $[\xi_3 \to \xi_4]$ and $[\xi_5 \to \xi_7]$, $[\xi_6 \to \xi_7]$.}
\label{fig:n=7}
\end{figure}
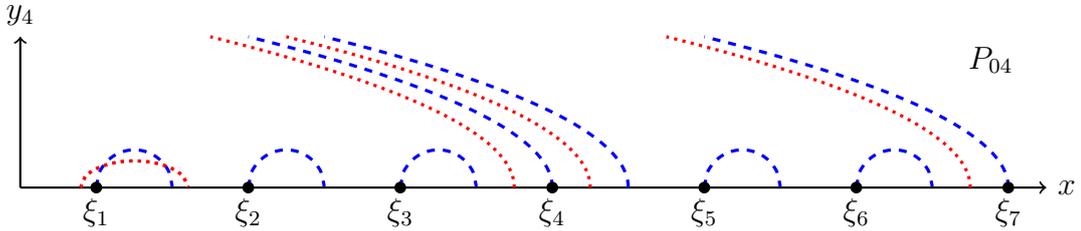

\begin{theorem}
\label{thm1}
For $\varepsilon>0$ sufficiently small and suitable choices of polynomial functions $f_i, g_i: \R^2 \to \R$, $i=1,\ldots ,5$, system~(\ref{sysn}) realises the DNN graph with $n \in \N$ as a heteroclinic network in $\R^6_{y \geq 0}$ between the nodes $\xi_k:=(2k-1,0,0,0,0,0)$, where $k=1,\ldots,n$.
\begin{align}
\label{sysn}
\begin{cases}
\dot{x} &= -\varepsilon \prod\limits_{k=1}^{2n-1} (x-k) + \sum\limits_{j=1}^5 y_jf_j(x,y_j)\\
\dot{y}_j &= y_jg_j(x,y_j)
\end{cases}
\end{align}

\end{theorem}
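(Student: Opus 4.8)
The plan is to exhibit explicit polynomials $f_j,g_j$ generalising those of Propositions~\ref{prop_3}--\ref{prop_6} and to verify the realisation in the same three stages used there: the equilibrium skeleton on the $x$-axis, the prescribed connections inside the invariant coordinate planes, and the exclusion of connections that leave these planes. Since the cases $n\le 6$ are Propositions~\ref{prop_3}--\ref{prop_6}, we may assume $n\ge 7$ (the recipe below in fact specialises correctly for smaller $n$ as well). Note that the degree of \eqref{sysn} will grow linearly in $n$ while the dimension stays $\le 6$ — this growth of degree is exactly what the theorem trades for the fixed dimension.

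With $\xi_k=(2k-1,0,\dots,0)$ and every $\dot y_j$ carrying the factor $y_j$, the $x$-axis and each plane $P_{0j}$ are flow-invariant, and on the $x$-axis $\dot x=-\varepsilon\prod_{k=1}^{2n-1}(x-k)$, so the on-axis equilibria are exactly $x\in\{1,\dots,2n-1\}$: the odd ones are the nodes, the even ones auxiliary. Exactly as in the proof of Proposition~\ref{prop_3} one checks, for $0<\varepsilon\ll 1$, that the terms $y_jf_j(x,y_j)$ have for small $y_j>0$ the same sign on each interval $(m,m+1)$ as $-\varepsilon\prod(x-k)$, so the $\{\dot x=0\}$ curve which for $\varepsilon=0$ lies on the axis is pushed out of $\R^6_{y\ge 0}$; hence there are no spurious equilibria near the axis, every node is contracting and every auxiliary equilibrium expanding along $x$, and one reads off from the (block-structured) Jacobian that the $y_j$-eigenvalue at $\xi_k$ is positive precisely for the two indices $j\in\{a,b\}$ for which $\xi_k$ carries an outgoing connection in $P_{0j}$.

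Next we build the planar connections. For $j\in\{1,2\}$ the plane $P_{0j}$ contains only the incoming connections to $\xi_j$ (two from the right in $P_{01}$, one from each side in $P_{02}$); all remaining nodes form a single contiguous block and are simultaneously blocked, and made locally stable in $P_{0j}$, by one red ellipse $c\,y_j^2+(x-a)^2-r^2=0$ with $c,r\to\infty$ as $n\to\infty$, generalising the factor $64y_1^2+(x-(\xi_3-\tfrac12))^2-7$ of \eqref{sys6}, so that $g_1,g_2$ keep bounded degree. For $j\in\{3,4,5\}$ the plane $P_{0j}$ carries the incoming connections to $\xi_j,\xi_{j+3},\xi_{j+6},\dots$, so there we attach a left-opening parabola ($x$-nullcline) and a pair of parabolas ($y_j$-nullcline) at each such sink, one left-opening separating blue parabola between consecutive sinks (as $-y_3^2-x+(\xi_3+1)$ in \eqref{sys6}), radius-$\tfrac12$ circles as the remaining $x$-nullclines at all other nodes, and a small ellipse (as in Figure~\ref{fig:zoom} and Figure~\ref{fig:n=7}) around each of the at most four leftover nodes — those among $\xi_1,\xi_2$ together with the one or two tail nodes determined by $n\bmod 3$. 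Choosing all parabolas (blue and red) with one common slope, no two of them meet, and the off-axis nullclines in $P_{0j}$ intersect only where a blocking ellipse crosses a circle, producing finitely many unstable equilibria that lie on no connection. For each prescribed pair $(\xi_{s-1},\xi_s)$, $(\xi_{s-2},\xi_s)$, the relevant branch of the one-dimensional unstable manifold of the saddle enters a region of $P_{0j}$ that is bounded by these nullclines, is positively invariant, and whose only equilibrium is $\xi_s$, hence limits on $\xi_s$ by the Poincar\'{e}--Bendixson theorem, exactly as for $n=3$; and the blocking ellipses, together with the circle $x$-nullclines attached to the leftover nodes, shield those nodes so that no connection within $P_{0j}$ starts or ends at one of them. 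All of these connections are of saddle--sink type inside an invariant plane, hence robust under perturbations preserving the coordinate planes.

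It remains to show that no further connections appear off the coordinate planes, and this is the main obstacle. Fix a node $\xi_k$ and let $P_{0a},P_{0b}$ be the two planes carrying its outgoing connections $\xi_k\to\xi_{k+1}$ and $\xi_k\to\xi_{k+2}$. By the first stage the unstable eigenspace of $\xi_k$ is exactly $\mathrm{span}(e_{y_a},e_{y_b})$, so $W^u(\xi_k)$ is two-dimensional and, since $\bigcap_{j\notin\{a,b\}}\{y_j=0\}=P_{0a}+P_{0b}$ is invariant, contained in this three-dimensional subspace with coordinates $(x,y_a,y_b)$. Within it, $\xi_{k+1}$ has negative $x$- and $y_a$-eigenvalues and positive $y_b$-eigenvalue (it still feeds $\xi_{k+2}$ in $P_{0b}$), so its stable manifold lies in the invariant plane $P_{0a}=\{y_b=0\}$; hence the only orbit of $W^u(\xi_k)$ reaching $\xi_{k+1}$ is the prescribed one in $P_{0a}$, and likewise the prescribed connection to $\xi_{k+2}$ lies in $P_{0b}$. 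For any other node $\xi_m$, $m\ne k,k+1,k+2$, one must rule out that the interior of $W^u(\xi_k)$ limits on $\xi_m$: if $\xi_m$ is a saddle in at least one of $P_{0a},P_{0b}$, its stable set inside the subspace lies in a proper invariant subspace ($P_{0b}$, $P_{0a}$, or the $x$-axis) that meets $W^u(\xi_k)$ only at $\xi_k$; the remaining case is that of the nodes $\xi_1,\xi_2$ when they are blocked in both $P_{0a}$ and $P_{0b}$, and here one uses that in $P_{0a}$ and $P_{0b}$ all connections run left to right from $x\ge\xi_k$ and that the blocking ellipses together with the circle $x$-nullcline at $\xi_1+\tfrac12$ (resp.\ $\xi_2+\tfrac12$) confine $W^u(\xi_k)$ away from $\xi_1$ (resp.\ $\xi_2$). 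With no equilibria in the open octant $\{y_a,y_b>0\}$ (the sign pattern of $y_af_a+y_bf_b$ being read off from the arrows in Figures~\ref{fig:n=3} and~\ref{fig:zoom}) and, as in the planar case, no periodic orbits, Poincar\'{e}--Bendixson on the invariant surface $W^u(\xi_k)$ then gives $\omega(p)\subseteq\{\xi_{k+1},\xi_{k+2}\}$ for every $p\in W^u(\xi_k)$, so $\xi_k$ has exactly its two prescribed outgoing connections; applying the same reasoning at each target node gives exactly the prescribed incoming ones, and therefore \eqref{sysn} realises the DNN graph on $n$ nodes. The genuinely delicate part is precisely this three-dimensional step: without a Poincar\'{e}--Bendixson theorem in $\R^3$ one must produce, for each $\xi_k$, the forward-invariant region in $P_{0a}+P_{0b}$ that traps $W^u(\xi_k)$ near the two prescribed connections and away from every other node; the remaining ingredients — the positions and stability of the equilibria, the $\varepsilon$-perturbation, and the planar saddle--sink connections — are routine extensions of the analysis of $n=3,4,5,6$.
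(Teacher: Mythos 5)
Your overall skeleton matches the paper's: nodes on the $x$-axis, equilibria controlled by the $\varepsilon$-term, prescribed connections obtained by nullcline plus Poincar\'e--Bendixson arguments inside the invariant planes $P_{0j}$, unwanted planar connections suppressed by blocking ellipses, and off-plane behaviour controlled through invariant coordinate subspaces. Be aware, though, that the paper's proof of Theorem~\ref{thm1} consists almost entirely of the part you treat most lightly: it writes down the building blocks $P_\pm$, $E_\pm$ and the wide ellipses $E_l$ with explicit parameter conditions, introduces the index sets $I_{j,\pm,n}$ with the case split according to $n \bmod 3$, and gives closed formulas for every $f_j$ and $g_j$ (including the separating parabolas encoded in $\tilde f_j$), the dynamical verification then being inherited from Propositions~\ref{prop_3}--\ref{prop_6}. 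Your qualitative description of the same recipe is consistent with this, but since the theorem asserts the existence of ``suitable'' polynomials, the mod-$3$ bookkeeping of which nodes carry left- versus right-centred circles and which carry ellipses is part of what a complete proof must supply; as written you only gesture at it.

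The genuine gap is the step you yourself flag at the end: excluding heteroclinic connections between nodes outside the coordinate planes. Invoking Poincar\'e--Bendixson ``on the invariant surface $W^u(\xi_k)$'' is not legitimate: $W^u(\xi_k)$ is a two-dimensional, non-closed immersed manifold inside the invariant $3$-space spanned by the two outgoing planes, the $\omega$-limit set of one of its points need not lie on it, and there is no Poincar\'e--Bendixson theorem in $\R^3$ (your auxiliary claims of ``no equilibria'' and ``no periodic orbits'' in the open octant are also unproved). The configuration you correctly isolate as problematic --- a node $\xi_m$ whose two expanding directions both avoid the planes $P_{0a},P_{0b}$ of $\xi_k$ (for instance $\xi_n$, or $\xi_1,\xi_2$, seen from a node whose outgoing connections lie in planes among $P_{03},P_{04},P_{05}$; this already occurs for $n=5$) --- is a sink of the system restricted to that $3$-space, so its stable set there is open and the stable-manifold-in-an-invariant-plane argument of Proposition~\ref{prop_4} does not apply to it. Ruling out that the two-dimensional $W^u(\xi_k)$ meets this basin requires an explicit forward-invariant trapping region or a sign/monotonicity argument valid off the planes, which you only sketch (``the blocking ellipses \ldots confine $W^u(\xi_k)$'') and then concede you have not produced. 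Consequently the ``no unwanted connections'' half of the realisation of the DNN graph by system~\eqref{sysn} is not established by your proposal; note that the paper does not attempt your stronger claim about the full $\omega$-limit set of $W^u(\xi_k)$ at all, but limits itself to the planar nullcline analysis together with the restriction argument of Proposition~\ref{prop_4}, so completing your more ambitious route would require new work beyond what is in the paper.
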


\begin{proof}
As before, the existence of the desired connections comes down to the nullclines of system~(\ref{sysn}) in $P_{0j}$. From the previous constructions it follows that for $j=1,\ldots,5$ we must define $f_j, g_j$ with suitable (semi)circles, ellipses and parabolas such that (i) the correct equilibria are generated, (ii) the local stability at the nodes checks out and (iii) desired connections are created while unwanted ones are inhibited.

We start with the $x$-nullclines, and thus with $f_j$. All we need here is the standard (semi)circles and parabolas we have already used before. As usual, identifying $\xi_k$ with its non-zero coordinate we define
\begin{align*}
P_+(x,y_j,\xi_k) &:= \ \ y_j^2-x+\xi_k\\
P_-(x,y_j,\xi_k) &:= -y_j^2-x+\xi_k\\
E_+(x,y_j,\xi_k) &:= y_j^2+\left(x-\left(\xi_k+\frac{1}{2}\right)\right)^2-\frac{1}{4}\\
E_-(x,y_j,\xi_k) &:= y_j^2+\left(x-\left(\xi_k-\frac{1}{2}\right)\right)^2-\frac{1}{4},
\end{align*}
where $P_+$ and $P_-$ are parabolas intersecting the $x$-axis at $\xi_k$. The $+$ sign indicates that the parabola opens to the right; the $-$ sign that it opens to the left. Expressions $E_+$ and $E_-$ are ellipses, more precisely circles, centered at $\xi_k + \frac{1}{2}$ and $\xi_k - \frac{1}{2}$, respectively, accounting for the choice of $+$ and $-$ as an index. These intersect the $x$-axis at $\xi_k$ and at the equilibrium either to the right of $\xi_k$, in the case of $E_+$, or to the left of $\xi_k$, in the case of $E_-$, giving further meaning to the $+$ and $-$ signs.

For $j=1,\ldots,5$ and index sets defined in~\eqref{def:index1}--\eqref{def:index3} define:
\begin{align*}
f_j(x,y_j)&:=\tilde{f}_j(x,y_j) \prod_{k \in I_{j,+,n}} E_+(x,y_j,\xi_k)  \prod_{k \in I_{j,-,n}} E_-(x,y_j,\xi_k) \prod_{k \not\in I_{j,\pm,n}}P_-(x,y_j,\xi_k).
\end{align*}
We have $\tilde{f}_1 = \tilde{f}_2 \equiv 1$ and deal with the other $\tilde{f}_j$ further below. The first/second product creates all equilibria to the left/right of the largest node with an incoming connection in $P_{0j}$, by placing an appropriate circle as an $x$-nullcline. The third product creates the nodes with incoming connections in $P_{0j}$.

We now discuss the index sets $I_{j, \pm, n}$. In accordance with the definitions of the parabolas $P_\pm$ and ellipses/circles $E_\pm$ above, the idea is the following: The sets $I_{j,\pm,n}$ contain all those indices $k$ for which a blue semi-circle $x$-nullcline is attached to $\xi_k$ in $P_{0j}$. For those in $I_{j,+,n}$ the center of the semi-circle is to the right of $\xi_k$ at $\xi_k + \frac{1}{2}$, for those in $I_{j,-,n}$ it is to the left of $\xi_k$ at $\xi_k - \frac{1}{2}$. A list of the sets for $n\leq 8$ is given in Remark~\ref{rem:index-sets}. It can be compared with Figures~\ref{fig:n=3},\ref{fig:n=4},\ref{fig:n=5} and \ref{fig:n=6} to verify the positions of the (blue) $x$-nullclines. For any $n$ we have
\begin{align}
\label{def:index1}
&I_{1,+,n}:= \{ \ \}, \quad &&I_{1,-,n}:=\{2,3, \ldots n\}\\
&I_{2,+,n}:= \{ 1 \}, \quad &&I_{2,-,n}:=\{3,4, \ldots n\}
\end{align}
since the only incoming connections in $P_{01}$ and $P_{02}$ are to $\xi_1$ and $\xi_2$, respectively, as pointed out above.

The other index sets $I_{j,\pm,n}$ are as follows
\begin{align}
\label{def:index2}
I_{3,+,n} &:=\{1, \ldots ,n \} \setminus \left( \{3k \mid k \in \N \} \cup I_{3,-,n} \right)\\
I_{4,+,n} &:=\{1, \ldots ,n \} \setminus \left( \{3k+1 \mid k \in \N \} \cup I_{4,-,n} \right)\\
I_{5,+,n} &:=\{1, \ldots ,n \} \setminus \left( \{3k+2 \mid k \in \N \} \cup I_{5,-,n} \right)
\end{align}
while:
\begin{align}
&\text{for $n=3k$:} &&I_{3,-,n}:=\{\ \} \quad &&I_{4,-,n}:=\{n-1 ,n \} \quad &&I_{5,-,n}:=\{n \}\\
&\text{for $n=3k+1$:} &&I_{3,-,n}:=\{n \} \quad &&I_{4,-,n}:=\{\ \} \quad &&I_{5,-,n}:=\{n-1 ,n \}\\
&\text{for $n=3k+2$:} &&I_{3,-,n}:=\{n-1, n \} \quad &&I_{4,-,n}:=\{n \} \quad &&I_{5,-,n}:=\{\ \}\label{def:index3}
\end{align}

The remaining $\tilde{f}_j(x,y_j)$ creates the first equilibrium (not a node) to the right of each set of three nodes, e.g.\ $\xi_3 + 1 \in P_{03}$ for $n=6$, see Figure~\ref{fig:n=6}. In general, this can be expressed as
\begin{align*}
\tilde{f}_j(x,y_j)&:= \prod_{k \in I_j} P_-(x,y_j,\xi_k+1)
\end{align*}
with
\begin{align*}
I_{3} &:=\{3,6,9, \ldots ,3m \ | \ 3m<n \}\\
I_{4} &:=\{4,7,10, \ldots ,3m+1 \ | \ 3m+1<n \}\\
I_{5} &:=\{5,8,11, \ldots ,3m+2 \ | \ 3m+2<n \}
\end{align*}
In other words, the product runs over all $k$ in $\{1,2,\ldots,n\} \setminus I_{j,\pm,n}$, except for the biggest element of this set.

This concludes the discussion of the $x$-nullclines in the form of the functions $f_j$. Note that for $n \leq 6$ the only non-trival $\tilde{f}_j$ is for $n=6$:
$$\tilde{f}_3(x,y_3):=-y_3^2-x-(\xi_3+1) = P_-(x,y_3,\xi_3+1).$$

We continue with the functions $g_j$ for $j = 1,\ldots ,5$. Here we need general expressions for wider and wider ellipses covering a certain number $l$ of smaller (blue) ellipses/(semi)circles. Such ellipses occur only in $P_{01}$ and $P_{02}$. 

Let us write
$$E_l(x,y_j,c):=ay_j^2 + (x-c)^2-b=0$$
and discuss the roles of the parameters $a,b,c$ for each $l$: $a$ controls the height of the ellipse (needs to be less than 1, so as not to interfere with other nullclines), $b$ controls the width (needs to be greater than $2l-1$ if $l$ blue ellipses are to be covered), and $c$ controls where the ellipse is centered (this is one half of the minimal width plus 2 (resp.\ 4) in $P_{01}$ (resp.\ $P_{02}$)). This yields $c=l+\frac{3}{2}$ in $P_{01}$ and $c=l+\frac{7}{2}$ in $P_{02}$.

The conditions for the parameters $a,b$ are the same in both planes. It follows that $a>b$ and $b>(l-\frac{1}{2})^2$ are sufficient. This fits our choices for the cases of $l=1,2,3$ in the previous section.

For $n\geq 4$, the functions $g_1$ and $g_2$ can now be set as:

\begin{align*}
g_1(x,y_1)&:=P_+\left(x,y_1,\xi_1+\frac{1}{2}\right)E_{n-3}\left(x,y_1,n-\frac{3}{2}\right);\\
g_2(x,y_2)&:=P_-\left(x,y_2,\xi_2-\frac{1}{2}\right)P_+\left(x,y_2,\xi_2+\frac{1}{2}\right)E_{n-3}\left(x,y_1,n+\frac{1}{2}\right).
\end{align*}
The ellipses $E_{n-3}$ are wide enough to cover the $n-3$ nodes in $P_{01}$ and $P_{02}$ that do not connect to $\xi_1$ and $\xi_2$, respectively. As pointed out in subsections~\ref{subsec:n=5} and \ref{subsec:n=6} using one wider ellipse instead of $n-3$ smaller ones keeps the degree of $g_1$ and $g_2$ constant as the number of nodes increases: the degree of $g_1$ is $4$ and that of $g_2$ is $6$ when using one ellipse. If we were using $n-3$ ellipses the degrees would be, respectively, $2+2(n-3)=2n-4$ and $4+2(n-3)=2n-2$.

For the other $j$ we distinguish the respective subcases $n$ modulo 3. In $P_{0j}$, $j=3,4,5$, we need only $E_1(x,y_j,c)$ or $E_2(x,y_j,c)$ to cover circles either at the start or at the end of the line of nodes. At the beginning we have $E_1(x,y_j,\xi_1+\frac{1}{2})$ or $E_2(x,y_j,\xi_1+\frac{3}{2})$, whereas at the end we use either $E_1(x,y_j,\xi_n-\frac{1}{2})$ or $E_2(x,y_j,\xi_n-\frac{3}{2})$. The function $P_+(x,y_j,c)$, which produces parabolas opening to the right, is not used in these planes.

The use of $P_-(x,y_j,c)$ is described as a function of the number of nodes $n$, modulo 3. For $j=3$ we have:
\begin{itemize}
\item $n=3k$:
\begin{align*}
g_3(x,y_3)&:=P_-\left(x,y_3,\xi_n-\frac{1}{2}\right)\prod\limits_{m=1}^{k-1}P_-\left(x,y_3,\xi_{3m} \pm\frac{1}{2}\right)
\end{align*}
\item $n=3k+1$:
\begin{align*}
g_3(x,y_3)&:=E_1\left(x,y_3,\xi_n-\frac{1}{2}\right) \prod\limits_{m=1}^{k}P_-\left(x,y_3,\xi_{3m} \pm \frac{1}{2}\right)
\end{align*}
\item $n=3k+2$:
\begin{align*}
g_3(x,y_3)&:=E_2\left(x,y_3,\xi_n-\frac{3}{2}\right) \prod\limits_{m=1}^{k}P_-\left(x,y_3,\xi_{3m} \pm \frac{1}{2}\right)
\end{align*}
\end{itemize}

For $j=4$ we have:
\begin{itemize}
\item $n=3k$:
\begin{align*}
g_4(x,y_4)&:= E_1\left(x,y_4,\xi_1+\frac{1}{2}\right) E_2\left(x,y_4,\xi_n-\frac{3}{2}\right) \prod\limits_{m=1}^{k-1}P_-\left(x,y_4,\xi_{3m+1}\pm\frac{1}{2}\right)
\end{align*}
\item $n=3k+1$:
\begin{align*}
g_4(x,y_4)&:=P_-\left(x,y_4,\xi_n-\frac{1}{2}\right) E_1\left(x,y_4,\xi_1+\frac{1}{2}\right) \prod\limits_{m=1}^{k-1}P_-\left(x,y_4,\xi_{3m+1}\pm\frac{1}{2}\right)
\end{align*}
\item $n=3k+2$:
\begin{align*}
g_4(x,y_4)&:=E_1\left(x,y_4,\xi_1+\frac{1}{2}\right)E_1\left(x,y_4,\xi_n-\frac{1}{2}\right) \prod\limits_{m=1}^{k}P_-\left(x,y_4,\xi_{3m+1}\pm\frac{1}{2}\right)
\end{align*}
\end{itemize}

For $j=5$ we have:
\begin{itemize}
\item $n=3k$:
\begin{align*}
g_5(x,y_5)&:= E_2\left(x,y_5,\xi_1+\frac{3}{2}\right) E_1\left(x,y_5,\xi_n-\frac{1}{2}\right) \prod\limits_{m=1}^{k-1} P_-\left(x,y_5,\xi_{3m+2}\pm\frac{1}{2}\right)
\end{align*}
\item $n=3k+1$:
\begin{align*}
g_5(x,y_5)&:= E_2\left(x,y_5,\xi_1+\frac{3}{2}\right) E_2\left(x,y_5,\xi_n-\frac{3}{2}\right) \prod\limits_{m=1}^{k-1} P_-\left(x,y_5,\xi_{3m+2}\pm\frac{1}{2}\right)
\end{align*}
\item $n=3k+2$:
\begin{align*}
g_5(x,y_5)&:=P_-\left(x,y_5,\xi_n-\frac{1}{2}\right)
E_2\left(x,y_5,\xi_1+\frac{3}{2}\right) \prod\limits_{m=1}^{k-1} P_-\left(x,y_5,\xi_{3m+2}\pm\frac{1}{2}\right)
\end{align*}
\end{itemize}
This concludes the proof.
\end{proof}

\begin{remark}\label{rem:index-sets}
Listing the index sets $I_{j,\pm,n}$ for the smaller $n$ explicitly may help to convey the spirit of the construction. We list only the sets with $j \geq 3$, since $I_{1,\pm,n}$ and $I_{2,\pm,n}$ have a simpler pattern, see above.

For $n=3$:
\begin{align*}
I_{3,+,3} &:=\{1,2 \} \qquad &&I_{3,-,3} :=\{\ \}
\end{align*}

For $n=4$:
\begin{align*}
I_{3,+,4} &:=\{1,2 \} \qquad &&I_{3,-,4} :=\{4 \}\\
I_{4,+,4} &:=\{1,2,3 \} \qquad &&I_{4,-,4} :=\{\ \}
\end{align*}

For $n=5$:
\begin{align*}
I_{3,+,5} &:=\{1,2 \} \qquad &&I_{3,-,5} :=\{4,5 \}\\
I_{4,+,5} &:=\{1,2,3 \} \qquad &&I_{4,-,5} :=\{5 \}\\
I_{5,+,5} &:=\{1,2,3,4 \} \qquad &&I_{5,-,5} :=\{\ \}
\end{align*}

For $n=6$:
\begin{align*}
I_{3,+,6} &:=\{1,2,4,5 \} \qquad &&I_{3,-,6} :=\{\ \}\\
I_{4,+,6} &:=\{1,2,3 \} \qquad &&I_{4,-,6} :=\{5,6 \}\\
I_{5,+,6} &:=\{1,2,3,4 \} \qquad &&I_{5,-,6} :=\{6 \}
\end{align*}

For $n=7$:
\begin{align*}
I_{3,+,7} &:=\{1,2,4,5 \} \qquad &&I_{3,-,7} :=\{7\}\\
I_{4,+,7} &:=\{1,2,3,5,6 \} \qquad &&I_{4,-,7} :=\{\ \}\\
I_{5,+,7} &:=\{1,2,3,4 \} \qquad &&I_{5,-,7} :=\{6,7 \}
\end{align*}

For $n=8$:
\begin{align*}
I_{3,+,8} &:=\{1,2,4,5 \} \qquad &&I_{3,-,8} :=\{7,8 \}\\
I_{4,+,8} &:=\{1,2,3,5,6 \} \qquad &&I_{4,-,8} :=\{8 \}\\
I_{5,+,8} &:=\{1,2,3,4,6,7 \} \qquad &&I_{5,-,8} :=\{\ \}
\end{align*}
\end{remark}

\section{Stability of some heteroclinic cycles in DNN networks}
\label{sec_stability}

Without any claim to a comprehensive study, we determine stability results for some of the heteroclinic cycles in networks under the dynamics constructed above. The purpose is to illustrate how the choice of realization can affect the stability of the cycles. This may not be surprising but we believe it is noteworthy nevertheless\footnote{The reader less familiar with the study of the stability of cycles using basic transition matrices may want to focus on the results rather than the proofs.}. To do so, we first fix some more terminology and recall two definitions.

As is often done, we write $B_\varepsilon(x)$ for an $\varepsilon$-neighbourhood of a point $x \in \R^N$ and use $\ell(.)$ for standard Lebesgue measure. The basin of attraction of a compact invariant set $X \subset \R^N$ is the set of points in $\R^N$ with $\omega$-limit set in $X$, and we denote it by $\B(X)$. For $\delta >0$ the $\delta$-local basin of attraction $\B_\delta(X)$ is the subset of points in $\B(X)$ for which the forward trajectory never leaves $B_\delta(X)$.

\begin{definition}[\cite{Podvigina2013}, definition 3]
$X$ is called \emph{completely unstable} if there exists $\delta>0$ such that $\ell(\B_\delta(X))=0$.
\end{definition}

\begin{definition}[\cite{Podvigina2012}, definition 2]
$X$ is called \emph{fragmentarily asymptotically stable} if $\ell(\B_\delta(X))>0$ for any $\delta>0$.
\end{definition}

We now start with a result that applies to any quasi-simple cycle in a DNN network in $\R^4$. Quasi-simple cycles are defined in \cite{GdS-C2019}. These have one-dimensional connections in flow-invariant spaces of the same dimension. When restricting attention to coordinate planes, our construction produces quasi-simple cycles: inside $P_{0j}$ the connections are one-dimensional and $\dim P_{0j}=2$ for all $j$.

\begin{proposition}\label{prop:cu}
Let $\Sigma$ be a quasi-simple cycle with two nodes in a heteroclinic  network in $\R^4$ obtained from the construction above. If at each node there is a transverse positive eigenvalue for the Jacobian matrix at the node, then $\Sigma$ is completely unstable.
\end{proposition}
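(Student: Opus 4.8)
The plan is to analyze the return map associated with $\Sigma$ using the standard machinery of transition matrices for heteroclinic cycles (see \cite{PodviginaAshwin2011, Podvigina2012}), and to show that the relevant product of transition matrices forces the local basin of attraction to have zero measure. First I would set up coordinates: since $\Sigma$ is quasi-simple with two nodes, say $\xi_a$ and $\xi_b$, with one-dimensional connections $[\xi_a \to \xi_b]$ and $[\xi_b \to \xi_a]$ each lying in a coordinate plane $P_{0a}$, $P_{0b}$, I would at each node linearize the flow and split the transverse directions into the one radial direction along which the cycle is being approached, the expanding transverse direction guaranteed by hypothesis (the positive eigenvalue), and the remaining coordinate directions in $\R^4$. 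In $\R^4$, with two nodes on the $x$-axis and two connections each using one $y$-coordinate, the transverse space at each node is low-dimensional, so the bookkeeping is short.

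Next I would write down the local maps near each node in terms of the ratios of eigenvalues (the contracting eigenvalue governing approach along the connection, the transverse expanding eigenvalue, and any further transverse eigenvalues), following the construction of the transition matrix $M$ as a product of the local matrices $M_{\xi_a}$ and $M_{\xi_b}$ interspersed with the (trivial, near-identity) global maps along the connections. The key point is that the hypothesis "at each node there is a transverse positive eigenvalue" means that in the linearization at each node there is a direction transverse to $P_{0j}$ (not along the cycle) that is expanding; this expansion is not compensated anywhere along the cycle, so any point starting off the cycle with a nonzero component in such a direction is pushed away before it can return. Concretely, I would argue that the relevant entry (or block) of the return map has all its expanding behaviour uncancelled, so that the set of initial conditions in a $\delta$-neighbourhood whose forward orbit stays $\delta$-close to $\Sigma$ is contained in a lower-dimensional set, hence has zero Lebesgue measure; by the definition quoted from \cite{Podvigina2013} this is exactly complete instability.

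The main obstacle I anticipate is handling the transverse directions that are \emph{not} the prescribed positive one and not along the cycle: in $\R^4$ there are two nodes and four coordinates, so at a given node beyond $x$ and the in/out $y$-coordinates there is at most one extra coordinate direction whose eigenvalue sign could a priori be negative (contracting), and I must make sure that a contraction there cannot rescue the trajectory — i.e. that the net transverse behaviour, as captured by the transition matrix eigenvalues, still has a genuinely expanding mode. The cleanest way around this is to observe that the transverse-positive direction at $\xi_a$ and the one at $\xi_b$ together span enough of the transverse space, and that the corresponding diagonal entries of $M_{\xi_a} M_{\xi_b}$ are each $>1$ (the global maps being close to identity and leaving the coordinate structure intact by the robustness noted after Proposition~\ref{prop_3}); then a point stays near $\Sigma$ only if it lies in the intersection of the stable/neutral directions of $M$, a measure-zero set. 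I would also note at the outset that complete instability of $\Sigma$ does not depend on what happens in the full $\R^6$ system because $\R^4 \subset \R^6$ is flow-invariant here, so restricting to $\R^4$ is legitimate and the extra two coordinates only make $\Sigma$ ``more unstable''.
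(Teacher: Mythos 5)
There is a genuine gap at the heart of your argument. Your key step is the claim that the positive transverse eigenvalue at each node produces expansion that is ``not compensated anywhere along the cycle,'' so that the $\delta$-local basin is confined to a lower-dimensional set. This inference is not valid as a general principle, and the paper itself contains the counterexample to it: the same two-node cycles in the Rock--Scissors--Paper network realised by replicator dynamics necessarily have positive transverse eigenvalues (there are outgoing connections transverse to each two-node cycle), yet they are fragmentarily asymptotically stable for suitable parameters \cite{GdS-C2019,GdS-C2020}. The reason is that a trajectory spends only a finite time near each node, and the transverse growth accumulated there can be dominated by the radial/contracting ratios, so whether the basin has positive measure depends quantitatively on the eigenvalue ratios and on the \emph{type} of the connections, not merely on the sign of a transverse eigenvalue. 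Your related assertion that ``the corresponding diagonal entries of $M_{\xi_a}M_{\xi_b}$ are each $>1$'' is also incorrect: in Podvigina's formalism the transition matrices act on logarithmic coordinates, the transverse direction contributes a diagonal entry equal to $1$, and the positive transverse eigenvalue $t_j$ appears as a \emph{negative off-diagonal} entry $-t_j/e_{jk}$.

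What actually closes the argument, and what the paper does, is to use the specific structure of this construction: the connections are of contracting-to-expanding type with identity global maps, so the basic transition matrices are the explicit lower-triangular $2\times 2$ matrices
$M_j=\left[\begin{smallmatrix} c_{jk}/e_{jk} & 0\\ -t_j/e_{jk} & 1\end{smallmatrix}\right]$,
and their product $M^{(j)}=\left[\begin{smallmatrix}\alpha_j & 0\\ \beta_j & 1\end{smallmatrix}\right]$ has $\alpha_j>0$ and, crucially, $\beta_j<0$ \emph{because} the transverse eigenvalues are positive. One then invokes Podvigina's Lemma~5 \cite{Podvigina2012}: if $\alpha_j\le 1$ then $\lambda_{\max}=1$ and condition (ii) fails; if $\alpha_j>1$ then the leading eigenvector satisfies $w_2^{\max}=\beta_j(\alpha_j-1)^{-1}w_1^{\max}$, so its components have opposite signs and condition (iii) fails. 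Either way a condition of the lemma fails and $\Sigma$ is completely unstable. Without this computation (or an equivalent quantitative control of the return map), your proposal does not establish the conclusion; it would equally ``prove'' complete instability in the replicator-dynamics realisation, where the conclusion is false.
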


\begin{proof}
Cross-sections to the flow in $\R^4$ are two-dimensional and the global maps are the identity, that is, the connections are both of contracting-to-expanding type in the language of Garrido-da-Silva \cite{GdS-PhD}. Therefore, the basic transition matrices at the nodes (wlog, we denote the nodes by $\xi_1$ and $\xi_2$) are of the form
$$
M_1 = \left[
\begin{array}{cc}
\dfrac{c_{12}}{e_{12}} & 0 \\
 & \\
-\dfrac{t_1}{e_{12}} & 1
\end{array}
\right] \;\; \mbox{ and  } \;\; 
M_2 = \left[
\begin{array}{cc}
\dfrac{c_{21}}{e_{21}} & 0 \\
 & \\
-\dfrac{t_2}{e_{21}} & 1
\end{array}
\right] .
$$
The matrix $M_j$ describes the transition $H_j^{\text{in},k} \rightarrow H_k^{\text{in},j}$ with $j\neq k \in \{1,2\}$. Because we assume that the transverse eigenvalue is positive, we can write the transition matrices $M^{(j)}=M_kM_j$ as follows
$$
M^{(j)} = \left[
\begin{array}{cc}
\alpha_j & 0 \\
 & \\
\beta_j & 1
\end{array}
\right] 
$$
with $\alpha_j>0$ and $\beta_j<0$.
The eigenvalues of $M^{(j)}$ are $\alpha_j$ and 1. According to Lemma 5 in Podvigina \cite{Podvigina2012}, a cycle is completely unstable if one of the conditions in the lemma fails. We show that if $\lambda_{\max}>1$ (condition (ii) is satisfied) then $w_1^{\max}w_2^{\max}<0$ (condition (iii) fails) and hence, the cycle is completely unstable.

Assume then that $\alpha_j>1$ so that $\alpha_j = \lambda_{\max} >1$. Calculating the eigenvector associated with $\lambda_{\max}$ we obtain
$$
w_2^{\max} = \dfrac{\beta_j}{\alpha_j -1} w_1^{\max}.
$$
Given that $\alpha_j>1$ and $\beta_j<0$ the proof finishes.
\end{proof}

Our DNN network with three nodes has the same digraph as the Rock-Scissors-Paper network realized using replicator dynamics in \cite{GdS-C2019,GdS-C2020}.

An immediate consequence of Proposition \ref{prop:cu} is that the choice of realisation of the Rock-Scissors-Paper network produces very different results concerning the stability of the 2-node cycles. The stability of the cycles in the Rock-Scissors-Paper network under replicator dynamics may be found in the work of Garrido-da-Silva and Castro \cite{GdS-C2019,GdS-C2020} where the 2-node cycles are shown to possess some stability for various parameter values. Under replicator dynamics there are distinct parameter values for which each of the 2-node cycles is fragmentarily asymptotically stable. With our realisation all 2-node cycles are completely unstable\footnote{Since our construction is abstract, it is meaningless to relate our parameters to those used in \cite{GdS-C2019,GdS-C2020}, which are associated to the players' payoffs.}. It should be noted that Proposition \ref{prop:cu} does not apply to the model under replicator dynamics studied in \cite{GdS-C2019,GdS-C2020} as the connections are then of contracting-to-transverse type leading to basic transition matrices unlike those in Proposition \ref{prop:cu}.

In higher dimensions the stability of 2-node cycles can exhibit more varied stability properties in DNN networks. As an example, we study the stability of the 2-node cycle $C_{13} = [\xi_1 \rightarrow \xi_3 \rightarrow \xi_1]$ in the 4-node network in $\R^5$, see subsection~\ref{subsec:4}.

Denote the eigenvalues of the Jacobian matrix at $\xi_i$ in the direction of $y_j$ by $-c_{ij}$, if they are negative, and by $e_{ij}$ if they are positive. In the direction of $x$, the eigenvalues are denoted by $-r_i$. Then we can write
\begin{align*}
J(\xi_1) & =  \diag(-r_1, -c_{11}, e_{12}, e_{13}, -c_{14}) \\
J(\xi_3) & =  \diag(-r_3, e_{31}, -c_{32}, -c_{33}, e_{34}).
\end{align*}
Since the connections occur in the subspace of coordinates $(x,y_1,y_3)$ the directions $y_2$ and $y_4$ are transverse to this cycle. The sign of the eigenvalues in these two directions is determined by the other connections in the network that involve the nodes $\xi_1$ or $\xi_3$.
In our construction we choose the eigenvalues to be negative whenever possible, that is, transverse eigenvalues are negative unless there is a connection from the equilibrium in the direction of the corresponding eigenvector.
Following \cite{Podvigina2012} we construct the basic transition matrices $M_i$, describing the dynamics between two consecutive incoming cross-sections $H^{\text{in}}_i \rightarrow H^{\text{in}}_{i+1}$
\begin{align*}
M_1 =  \left( \begin{array}{ccc}
\dfrac{c_{11}}{e_{13}} & 0 & 0 \\
 & & \\
-\dfrac{e_{12}}{e_{13}} & 1 & 0 \\
 & & \\
\dfrac{c_{14}}{e_{13}} & 0 & 1 
\end{array}\right)
& \hspace{.3cm} \mbox{and} \hspace{.5cm}
M_3 =  \left( \begin{array}{ccc}
\dfrac{c_{33}}{e_{31}} & 0 & 0 \\
 & & \\
\dfrac{c_{32}}{e_{31}}  & 1 & 0 \\
 & & \\
-\dfrac{e_{34}}{e_{31}}  & 0 & 1 
\end{array}\right).
\end{align*}
From these, we obtain the transition matrices $M^{(i)}$ describing the return map to $H^{\text{in}}_i$ as follows
\begin{align*}
M^{(1)} =  \left( \begin{array}{ccc}
\dfrac{c_{11}c_{33}}{e_{13}e_{31}} & 0 & 0 \\
 & & \\
-\dfrac{e_{12}c_{33}}{e_{13}e_{31}}+\dfrac{c_{32}}{e_{31}}  & 1 & 0 \\
 & & \\
\dfrac{c_{14}c_{33}}{e_{13}e_{31}}-\dfrac{e_{34}}{e_{31}} & 0 & 1 
\end{array}\right)
& \hspace{.2cm} \mbox{and} \hspace{.4cm}
M^{(3)} =  \left( \begin{array}{ccc}
\dfrac{c_{11}c_{33}}{e_{13}e_{31}} & 0 & 0 \\
 & & \\
\dfrac{c_{11}c_{32}}{e_{13}e_{31}}-\dfrac{e_{12}}{e_{13}}  & 1 & 0 \\
 & & \\
-\dfrac{c_{11}e_{34}}{e_{13}e_{31}}+\dfrac{c_{14}}{e_{13}}  & 0 & 1 
\end{array}\right).
\end{align*}

We study the stability of $C_{13}$ by checking \cite[Lemma 3]{Podvigina2012}. Let $\lambda_{\max}$ be the maximum, in absolute value, eigenvalue of the matrix $M^{(i)}$ and $\boldsymbol{w}^{\max}=\left(w_1^{\max},\ldots,w_3^{\max}\right)$ be an associated eigenvector. The stability index along the connection $[\xi_i \rightarrow \xi_j]$ is bigger than $-\infty$ if and only if the three following conditions are satisfied:
\begin{enumerate}
\item[(i)] $\lambda_{\max}$ is real;
\item[(ii)] $\lambda_{\max}>1$;
\item[(iii)] $w_{l}^{\max}w_{q}^{\max}>0$ for all $l,q=1,\ldots N$.
\end{enumerate}

The first condition is trivially satisfied.
We have $\lambda_{\max}>1 \Leftrightarrow c_{11}c_{33}>e_{13}e_{31}$. The reader may check that the coordinates of $\boldsymbol{w}^{\max}$ have the same sign if and only if 
\begin{align*}
-\dfrac{e_{12}c_{33}}{e_{13}e_{31}}+\dfrac{c_{32}}{e_{31}} & > 0 \\
\dfrac{c_{14}c_{33}}{e_{13}e_{31}}-\dfrac{e_{34}}{e_{31}} & > 0 \\
\dfrac{c_{11}c_{32}}{e_{13}e_{31}}-\dfrac{e_{12}}{e_{13}}  & > 0 \\
-\dfrac{c_{11}e_{34}}{e_{13}e_{31}}+\dfrac{c_{14}}{e_{13}} & > 0.
\end{align*}

Hence, if one of the above conditions is not satisfied the cycle $C_{13}$ is completely unstable. Otherwise, it is fragmentarily asymptotically stable. A more detailed study of the stability of the cycles can be done by using Lohse \cite[Theorem 3.1]{Lohse2015} and Garrido-da-Silva and Castro \cite[Lemma 3.2]{GdS-C2019} to establish that a cycle is fragmentarily asymptotically stable by relating the sign of the stability indices to the stability properties. The stability index along each connection can be calculated using \cite[Theorem 3.10]{GdS-C2019} and the function $F^{\ind}$ defined in \cite[Appendix A.1]{GdS-C2019}. Since the study of stability is not the main purpose of this work, we leave this for the interested reader. The point we want to make is that, as expected, different construction methods lead to different stability properties.

\paragraph{Acknowledgements:}

The first author was partially supported by CMUP, member of LASI, which is financed by national funds through FCT – Funda\c{c}\~ao para a Ci\^encia e a Tecnologia, I.P., under the projects with reference UIDB/00144/2020 and UIDP/00144/2020.

Both authors are grateful to the reviewers whose comments helped improve the exposition.


\begin{thebibliography}{99}

\bibitem{AfraimovichMosesYoung}
V.S.\ Afraimovich, G.\ Moses and T.\ Young (2016) Two-dimensional heteroclinic attractor in the generalized Lotka–Volterra system, \emph{Nonlinearity} {\bf 29}, 1645--1667.
%
\bibitem{AfrRabVar2004}
V.S.\ Afraimovich, M.I.\ Rabinovich, and P.\ Varona (2004)
Heteroclinic contours in neural ensembles and the winnerless competition principle.
\emph{Inter. J. Bifur. Chaos}~{14},{1195--1208}

\bibitem{AfrZakRab2004}
V.S.\ Afraimovich, M.A.\ Zaks, and M.I.\ Rabinovich (2018)
Mind-to-mind heteroclinic coordination: Model of sequential episodic memory initiation.
\emph{Chaos}~{28},{053107}

\bibitem{AfrZhiRab2004}
V.S.\ Afraimovich, V.P.\ Zhigulin, and M.I.\ Rabinovich (2004)
On the origin of reproducible sequential activity in neural circuits.
\emph{Chaos}~{14},{1123--1129}

\bibitem{AshwinCastroLohse}
P. Ashwin, S.B.S.D.\ Castro and A.\ Lohse (2020)  Almost complete and equable heteroclinic networks, \emph{Journal of Nonlinear Science} {\bf 30}, 1--22.
%
 \bibitem{AP2013}
 P. Ashwin and C. Postlethwaite (2013) On designing heteroclinic networks from graphs, \emph{Physica D} {\bf 265} (1), 26--39.
%
 \bibitem{AP2016a}
 P. Ashwin and C. Postlethwaite (2016) Designing heteroclinic and excitable networks in phase space using two populations of couple cells, \emph{Journal of Nonlinear Science} {\bf 26}, 345--364.
%
\bibitem{Castro_etal2022}
S.B.S.D.\ Castro, A.\ Ferreira, L.\ Garrido-da-Silva and I.S.\ Labouriau (2022) Stability of cycles in a game of rock-scissors-paper-lizard-Spock, \emph{SIAM Journal on Applied Dynamical Systems} {\bf 21} (4), 2393--2431.
%
 \bibitem{Field2015}
 {M.J. Field} (2015) Heteroclinic Networks in Homogeneous and Heterogeneous Identical Cell Systems, \emph{Journal of Nonlinear Science} {\bf 25}, 779--813.
%
 \bibitem{Field2017}
 {M.J. Field} (2017) Patterns of desynchronization and resynchronization in heteroclinic networks, \emph{Nonlinearity} {\bf 30}, 516--557.
%
\bibitem{GdS-PhD}
L.\ Garrido-da-Silva (2018) 
\emph{Heteroclinic Dynamics in Game Theory},
PhD dissertation, University of Porto
%
\bibitem{GdS-C2019}
L.\ Garrido-da-Silva and S.B.S.D.\ Castro (2019) 
Stability of quasi-simple heteroclinic cycles,
\emph{Dynamical Systems: an international journal}, {\bf 34} (1), 14--39.
%
\bibitem{GdS-C2020}
L.\ Garrido-da-Silva and S.B.S.D.\ Castro (2020) 
Cyclic dominance in a two-person rock-scissors-paper game,
\emph{International Journal of Game Theory}, {\bf 49}, 885--912.
%
\bibitem{Lohse2015}
{A.\ Lohse} (2015)
{Stability of heteroclinic cycles in transverse bifurcations},
\emph{Physica D} {\bf 310}, {95--103}.
%
\bibitem{Podvigina2012}
{O.\ Podvigina} (2012)
{Stability and bifurcations of heteroclinic cycles of type $Z$},
\emph{Nonlinearity} {\bf 25}, {1887--1917}.
%
\bibitem{Podvigina2013}
{O.\ Podvigina} (2013)
{Classification and stability of simple homoclinic cycles in $\R^5$},
\emph{Nonlinearity} {\bf 26}, {1501--1528}.
%
\bibitem{PodviginaLohse2019}
{O.\ Podvigina and A.\ Lohse} (2019)
{Simple heteroclinic networks in $\R^4$},
\emph{Nonlinearity} {\bf 32}, {3269--3293}.
%
\bibitem{PostlethwaiteRucklidge2022}
{C.M.\ Postlethwaite and A.M.\ Rucklidge} (2022)
{Stability of cycling behaviour near a heteroclinic network model of Rock-Paper-Scissors-Lizard-Spock},
\emph{Nonlinearity} {\bf 35}, 1702--1733.

\bibitem{PosStu2022}
C.M.\ Postlethwaite and R.\ Sturman (2022)
Stability of heteroclinic cycles in ring graphs.
\emph{Chaos}~{32},{063104}

\bibitem{Rab_etal2001}
M.\ Rabinovich, A.\ Volkovskii, P.\ Lecanda, R.\ Huerta, H.D.I.\ Abarbanel, and G.\ Laurent (2001)
Dynamical Encoding by Networks of Competing Neuron Groups: Winnerless Competition.
\emph{Physical Review Letters}~{\bf 18} (6), 068102
\end{thebibliography}
\end{document}